\newtheorem*{conj*}{Conjecture}
\newtheorem{theorem}{Theorem}[section]
\theoremstyle{definition}
\newtheorem*{remark}{Remark}
\theoremstyle{plain}
\newtheorem{lemma}[theorem]{Lemma}
\newcommand{\Q}{\mathbb{Q}}
\newcommand{\F}{\mathbb{F}}
\numberwithin{equation}{section}
\newtheoremstyle{example}
  {\topsep}   
  {\topsep}   
  {\normalfont}  
  {0pt}       
  {\bfseries} 
  {.}         
  {5pt plus 1pt minus 1pt} 
  {}          
\theoremstyle{example}
\def\({\left(}
\def\){\right)}
\def\lp{\left(}
\def\rp{\right)}
\def\lb{\left\{}
\def\rb{\right\}}
\def\lv{\left\lvert}
\def\rv{\right\rvert}
\DeclarePairedDelimiter{\floor}{\lfloor}{\rfloor}
\newcommand{\Addresses}{{
  \bigskip
  \footnotesize

  \textsc{Department of Mathematics, University of Utah,
    Salt Lake City, Utah 84112}\par\nopagebreak
  \textit{Email address}, A. Blaser: \texttt{avalon.blaser@utah.edu}

  \medskip

  \textsc{Department of Mathematics, University of Pennsylvania, Philadelphia, Pennsylvania 19104}\par\nopagebreak
  \textit{Email address}, M. Bradley: \texttt{mollykb@sas.upenn.edu}

  \medskip

  \textsc{Department of Mathematics, Harvey Mudd College,
    Claremont, California 91711}\par\nopagebreak
  \textit{Email address}, D. Vargas: \texttt{dvargas@g.hmc.edu}

  \medskip

  \textsc{Department of Mathematics and Statistics, Amherst College,
    Amherst, Massachusetts 01002}\par\nopagebreak
  \textit{Email address}, K. Xing: \texttt{kxing24@amherst.edu}

}}
\begin{document}
\title[Matrix Point Count Distributions on Some Algebraic Varieties]{Sato--Tate Type Distributions for Matrix Points on Elliptic Curves and Some K3 Surfaces}
\author{Avalon Blaser, Molly Bradley, Daniel Vargas, and Kathy Xing}
\keywords{Elliptic Curves, K3 Surfaces, Sato--Tate Type Distributions, Matrix points}
\subjclass[2020]{14Gxx; 11G25}

\maketitle

\begin{abstract}
Generalizing the problem of counting rational points on curves and surfaces over finite fields, we consider the setting of $n \times n$ matrix points with finite field entries. We obtain exact formulas for matrix point counts on elliptic curves and certain $K3$ surfaces for ``supersingular" primes. These exact formulas, which involve partitions of integers up to $n$, essentially coincide with the expected value for the number of such points. Therefore, in analogy with the Sato--Tate conjecture, it is natural to study the distribution of the deviation from the expected values for all primes. We determine the limiting distributions for elliptic curves and a family of $K3$ surfaces. For non-CM elliptic curves with square-free conductor, our results are explicit.

\end{abstract}

\section{Introduction and Statement of Results}

If $E/\Q$ is an elliptic curve, Hasse \cite{hasse1936} proved that for every prime $p$, the $\F_p$-point count is given by
\begin{equation}\label{eqn: Hasse bound}
    \#E(\F_p)=p+1 - a_E(p) ,
\end{equation}
where $\lv a_E(p)\rv < 2\sqrt{p}$. We refer to $a_E(p)$ as the trace of Frobenius for elliptic curves. The arithmetic statistics for this term's distribution have been widely studied. In the case where $p$ is supersingular for $E$, it is well-known that there is a simple, explicit formula for $\F_p$-point counts; if $p$ is supersingular, then $\#E(\F_p)=p+1$.

Motivated by questions in geometry and combinatorics, Huang \cite{huang2022mutually} examined the notion of ``matrix points" on varieties. We investigate the arithmetic aspect of counting such matrix points on certain varieties, including elliptic curves. To make this precise, let $C_{n, m}(\mathbb{F}_p)$ be the set of pairwise-commuting $m$-tuples of $n \times n$ matrices over $\mathbb{F}_p$. If $n\geq 1$, $p$ is a prime, and $E/\mathbb{F}_p$ is given by 
$$E : y^2 + \alpha_1 xy + \alpha_3 y = x^3 + \alpha_2 x^2 + \alpha_4 x + \alpha_6,$$ 
then we define the number of $n \times n$ matrix points on $E$ by
\begin{equation} \label{eq: matrix pts elliptic curve}
N_{n}(E, \mathbb{F}_p) \coloneqq \# \{ (A, B) \in C_{n, 2}(\mathbb{F}_p) \colon B^2 + \alpha_1 AB + \alpha_3 B= A^3 + \alpha_2 A^2 + \alpha_4 A + \alpha_6 I_n \} .
\end{equation}
Since the term $xy$ appears in the equation defining $E,$ it is natural that the matrix points are restricted to those in $C_{n,2}(\F_p)$. There are known formulas for $N_{n}(E, \mathbb{F}_p)$ in terms of $a_E(p)$, derived by Huang, Ono, and Saad \cite{huangonosaad}. These involve $q$-multinomial coefficients
\begin{equation*}
    \binom{n}{m_1, m_2, \cdots\!, m_k}_q \coloneqq \frac{(q; q)_n}{(q; q)_{m_1} (q; q)_{m_2} \cdots (q; q)_{m_k}}
    ,
\end{equation*}
where $(q;q)_0 \coloneqq 1$ and $(q; q)_n \coloneqq \prod_{i=1}^n (1-q^i)$ for $n \geq 1$. Leveraging these known formulas, we find the matrix point analogue to the classical explicit formula for $\F_p$-point counts when $p$ is supersingular.

\begin{theorem}\label{thm: supersingular formula}
    If $n \geq 1$, $E$/$\Q$ is an elliptic curve, and $p \geq 3$ is a supersingular prime for $E$, then
    $$ 
    N_{n}(E, \mathbb{F}_p) = p^{\frac{n^2}{2}} \sum_{\substack{r + s + u = n \\ r, s, u \ge 0 \\ r \:\! \equiv \:\! s \:\!\!\!\!\! \mod 2}} (-1)^{\frac{r-s}{2}} p^{\frac{u^2}{2}} \binom{n}{r, s, u}_p.
    $$ 
\end{theorem}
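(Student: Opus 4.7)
The plan is to derive the formula directly from the explicit formula of Huang, Ono, and Saad \cite{huangonosaad} that expresses $N_n(E, \F_p)$ as a $p$-multinomial sum whose coefficients are polynomial in the Frobenius eigenvalues. Writing $\alpha, \beta \in \C$ for the roots of $T^2 - a_E(p) T + p$, we have $\alpha + \beta = a_E(p)$ and $\alpha \beta = p$. The Huang-Ono-Saad formula expresses $N_n(E, \F_p)$ as a triple sum over $(r,s,u)$ with $r + s + u = n$, the summand being $\alpha^r \beta^s$ times a power of $p$ times the $p$-multinomial coefficient $\binom{n}{r,s,u}_p$. My first step is to recall this formula in a form where the role of $\alpha$ and $\beta$ is made explicit.

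The second step is to specialize to the supersingular case. By the Deuring criterion (or the classical theorem for $p \ge 3$), $p$ supersingular for $E$ means $a_E(p) = 0$, so the Frobenius eigenvalues are purely imaginary: $\alpha = i\sqrt{p}$ and $\beta = -i\sqrt{p}$. Thus
\[
\alpha^r \beta^s \;=\; i^{r+s} (-1)^s\, p^{(r+s)/2}.
\]
This expression vanishes unless $r + s$ is even, i.e.\ $r \equiv s \pmod 2$, in which case $i^{r+s} = (-1)^{(r+s)/2}$ and combining with $(-1)^s$ produces precisely the sign $(-1)^{(r-s)/2}$ appearing in the target formula.

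The third step is bookkeeping the powers of $p$. After the substitution above one obtains a single remaining power of $p$ in each summand; using the identity $r + s = n - u$ and consolidating, this exponent should collapse to $n^2/2 + u^2/2$, which is exactly what the statement requires once $p^{n^2/2}$ is pulled outside the sum. The main obstacle is therefore not conceptual but arithmetic: one must be careful with the exponent matching, making sure the contribution of $p^{(r+s)/2}$ from $\alpha^r \beta^s$ combines correctly with the $p$-exponent already present in the Huang-Ono-Saad summand to yield the clean $p^{(n^2+u^2)/2}$ total. Since the restriction $r \equiv s \pmod 2$ makes $r - s$ even, the sign $(-1)^{(r-s)/2}$ is well-defined, and the resulting sum is exactly the one claimed in the theorem.
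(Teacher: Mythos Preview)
Your approach is the same as the paper's: start from the Huang--Ono--Saad formula \eqref{eq: matrix pt with eigenvalue}, substitute the supersingular Frobenius eigenvalues $\alpha = i\sqrt{p}$, $\beta = -i\sqrt{p}$, and simplify. The power-of-$p$ bookkeeping and the sign reduction $i^{r+s}(-1)^s = (-1)^{(r-s)/2}$ (for $r\equiv s\pmod 2$) are both correct.

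There is, however, one genuine slip. You write that $\alpha^r\beta^s = i^{r+s}(-1)^s p^{(r+s)/2}$ ``vanishes unless $r+s$ is even.'' It does not: for $r+s$ odd this quantity is a nonzero purely imaginary multiple of $p^{(r+s)/2}$. What actually happens---and what the paper does---is a pairwise cancellation: the $p$-multinomial coefficient and the remaining $p$-power are symmetric in $r$ and $s$, so the terms indexed by $(r,s,u)$ and $(s,r,u)$ can be combined, and
\[
\alpha^r\beta^s + \alpha^s\beta^r \;=\; i^{r+s}p^{(r+s)/2}\bigl((-1)^s + (-1)^r\bigr) \;=\; 0
\quad\text{when } r\not\equiv s\pmod 2.
\]
Equivalently, in the paper's form with $\alpha^{r-s}$, one checks that the $(r,s,u)$ and $(s,r,u)$ summands are negatives of each other when $r-s$ is odd. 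Either way, the restriction to $r\equiv s\pmod 2$ comes from cancellation, not from individual vanishing. Once you fix this sentence, your proof is complete and matches the paper's; note also that the $(-1)^n(-1)^u$ prefactor in \eqref{eq: matrix pt with eigenvalue} disappears on the surviving terms because $r\equiv s\pmod 2$ forces $u\equiv n\pmod 2$.
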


\begin{remark}
When $E/ \Q$ is an elliptic curve with CM, Theorem \ref{thm: supersingular formula} applies for the primes $p$ that are inert in the CM field, which occur with density 1/2 in the set of all primes \cite{hecke}. In the non-CM case, even though the supersingular primes have density 0, Elkies \cite{elkies1987} proved that there are infinitely many such primes. 
\end{remark}

Moreover, we consider matrix points on the $K3$ surfaces with function fields given by \begin{equation} \label{eq: function field}
X_\lambda:s^2 = xy(x + 1)(y + 1)(x + \lambda y)
\end{equation}
where $\lambda \in \mathbb{Q} \setminus \{0, -1 \}$. This family of surfaces was discovered by Ahlgren, Ono, and Penniston \cite{aopk3}, who proved that for $p \ge 3$, the $\F_p$-point count is given by
\begin{equation}
\# X_{\lambda} (\mathbb{F}_p) = 1 + p^2 + 19p + p \cdot A_{\lambda}^*(p),
\end{equation}
where $\lv A_{\lambda}^*(p)\rv\leq 3$. One of the interesting properties of this family is that the zeta function of these surfaces is essentially the symmetric square of the zeta function of the Clausen elliptic curve
\begin{equation} \label{eq: Clausen curve}
E_{- \frac{\lambda}{\lambda + 1}}^{\text{CL}} : y^2 = (x + 1) \lp x^2 - \frac{\lambda}{\lambda + 1} \rp.
\end{equation}
If $n \ge 1$, the number of $n \times n$ matrix points on $X_\lambda$ is given by
\begin{equation} \label{eq: matrix point k3 surface def}
N_{n}(X_{\lambda}, \mathbb{F}_p) \coloneqq \# \{ (A, B, C)\in C_{n,3}(\F_p) \colon  C^2=AB(A+I_n)(B+I_n)(A+\lambda B), C \in \mathit{GL}_n(\mathbb{F}_p)\} .
\end{equation}

Using combinatorial input from partitions, Huang, Ono, and Saad \cite{huangonosaad} determined a general formula for $N_{n}(X_{\lambda}, \mathbb{F}_p)$ in terms of the Frobenius traces for the associated Clausen curve. Utilizing this formula, we explicitly determine $N_{n}(X_{\lambda}, \mathbb{F}_p)$ when $p$ is a supersingular prime for the associated Clausen curve.
This involves summing over all possible six-tuples of partitions $\lp \lambda_1, \dots, \lambda_6 \rp$ whose total size does not exceed $n$. We denote the size of $\lambda_i$ by $\lv \lambda_i \rv$, the length of $\lambda_i$ by $l(\lambda_i)$, and the number of $j$-parts in  $\lambda_i$ by $n(\lambda_i, j)$. In this notation, we have

\begin{theorem} \label{thm: k3 explicit formula}

If $n \ge 1$, $\lambda \in \Q \setminus \{0, -1\} $, and  $p \geq 3$ is a supersingular prime for the associated Clausen elliptic curve, then 
{\small
\begin{equation*}
    N_{n}(X_{\lambda}, \mathbb{F}_p) 
    = \sum\limits_{r=0}^n b_{n-r} \!\! \sum\limits_{\substack{\lambda_1 ,\dots, \lambda_6 \\ \lv\lambda_1\rv + \dots + \lv\lambda_6\rv = r}} (-1)^{l(\lambda_1) + \dots + l(\lambda_4)} \frac{p^{\frac{\sum n(\lambda_i , j)(n(\lambda_i , j) - 1)}{2}} p^{2l(\lambda_3) + l(\lambda_4) + l(\lambda_5) + l(\lambda_6)}}{\prod (p;p)_{n(\lambda_i , j)}} \gamma^{l(\lambda_4) + l(\lambda_5) + l(\lambda_6)}
    ,
\end{equation*}}
where $\gamma \coloneqq \phi_p(\lambda + 1)$, with $\phi_p$ the Legendre character of  $\mathbb{F}_p,$ and where  $\sum\limits_{r = 0}^{\infty} b_rq^r \coloneqq \prod\limits_{i=1}^\infty \lp 1 - q^i \rp^5$.
\end{theorem}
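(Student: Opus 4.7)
The plan is to start from the general partition-indexed formula for $N_n(X_\lambda,\F_p)$ obtained by Huang, Ono, and Saad and to specialize it to primes $p$ that are supersingular for the associated Clausen curve $E^{CL}_{-\lambda/(\lambda+1)}$. Their formula expresses $N_n(X_\lambda,\F_p)$ as a sum over tuples of partitions whose summands involve nonnegative integer powers of the Clausen trace $a^{CL}(p) := a_{E^{CL}_{-\lambda/(\lambda+1)}}(p)$, of $\gamma = \phi_p(\lambda+1)$, and of $p$, divided by products of the form $(p;p)_{n(\lambda_i,j)}$. Since $a^{CL}(p)=0$ at a supersingular prime, every summand in which $a^{CL}(p)$ appears with positive exponent vanishes, so the sum collapses onto the subset of partition tuples whose total ``$a$-exponent'' is zero.

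Next, I would isolate from this surviving subset the six partition slots $\lambda_1,\dots,\lambda_6$ carrying the residual $\gamma$ and sign data, and observe that any additional partition slots present in the Huang-Ono-Saad summand decouple multiplicatively from the $\lambda_1,\dots,\lambda_6$ block. Summing the decoupled slots against $q^{\text{total size}}$ and extracting the coefficient of $q^{n-r}$, where $r = |\lambda_1|+\dots+|\lambda_6|$, should reproduce the factor $b_{n-r}$. The precise combinatorial identity needed is one that recognizes the decoupled contribution as the $q$-expansion of $\prod_{i\ge 1}(1-q^i)^5$; the count of five copies should match the number of decoupled slots surviving the supersingular specialization, and the signs and $p$-powers inside each slot must combine to give the unweighted infinite product rather than some $p$-deformation of it.

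The remaining work is bookkeeping. The sign $(-1)^{l(\lambda_1)+\dots+l(\lambda_4)}$ should come from four of the six surviving partition slots entering the Huang-Ono-Saad formula with alternating signs; the factor $\gamma^{l(\lambda_4)+l(\lambda_5)+l(\lambda_6)}$ should be carried by the three partition slots whose Frobenius contribution involves the character $\phi_p(\lambda+1)$ per part; and the $p$-exponent $\tfrac12\sum n(\lambda_i,j)(n(\lambda_i,j)-1)+2l(\lambda_3)+l(\lambda_4)+l(\lambda_5)+l(\lambda_6)$ should assemble from the residual $p$-powers surviving on $\lambda_1,\dots,\lambda_6$ after the decoupling. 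The main obstacle will be the generating-function step: correctly identifying the decoupled slots, and verifying that they really do sum to $\prod_{i \ge 1}(1-q^i)^5$ after the cancellations forced by $a^{CL}(p)=0$, as well as checking that every residual sign, $p$-power, and $\gamma$-power on $\lambda_1,\dots,\lambda_6$ matches the theorem exactly.
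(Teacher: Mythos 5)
Your overall strategy --- specialize the Huang--Ono--Saad matrix-point formula at primes supersingular for the Clausen curve --- is the right one, and it is what the paper does. But the mechanism you propose does not match how that formula actually behaves at such primes, and the key observation is missing.

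The convenient form of the formula, recorded in \eqref{eq: K3 matrix pts}, is a six-fold partition sum whose summand carries the factor $\alpha^{2l(\lambda_5)}\overline{\alpha}^{2l(\lambda_6)}$ in the Frobenius \emph{eigenvalues} of the Clausen curve, not powers of the trace $a^{\mathrm{CL}}(p)$. When $p$ is supersingular, $a^{\mathrm{CL}}(p)=0$ forces $\alpha=\sqrt{p}\,i$ and $\overline{\alpha}=-\sqrt{p}\,i$, so $\alpha^2=\overline{\alpha}^2=-p$. Consequently nothing vanishes: every partition tuple still contributes, with $\alpha^{2l(\lambda_5)}\overline{\alpha}^{2l(\lambda_6)}=(-p)^{l(\lambda_5)+l(\lambda_6)}$. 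The resulting sign $(-1)^{l(\lambda_5)+l(\lambda_6)}$ cancels against the corresponding portion of the prefactor $(-1)^{l(\lambda_1)+\cdots+l(\lambda_6)}$, producing the $(-1)^{l(\lambda_1)+\cdots+l(\lambda_4)}$ in the theorem, while the surviving $p^{l(\lambda_5)+l(\lambda_6)}$ shifts the $p$-exponent from $2l(\lambda_3)+l(\lambda_4)$ to $2l(\lambda_3)+l(\lambda_4)+l(\lambda_5)+l(\lambda_6)$. Your proposal instead assumes the sum ``collapses onto the subset of partition tuples whose total $a$-exponent is zero,'' which is not what happens; if you tried to carry that out you would find no terms drop, and the sign and $p$-power bookkeeping you describe would have nothing to attach to.

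The generating-function step you anticipate is also unnecessary: the weight $b_{n-r}$ from $\prod_{i\geq 1}(1-q^i)^5$ is already present in the general formula \eqref{eq: K3 matrix pts}, valid for every prime $p\geq 3$, and is not produced by the supersingular specialization. Once you replace the vanishing mechanism with the eigenvalue substitution $\alpha^2=\overline{\alpha}^2=-p$ and track the sign cancellation and $p$-power shift, the theorem follows immediately with no further combinatorial identity required.
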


In the absence of exact formulas for matrix point counts, we study the distribution of their normalized errors as $p$ ranges over all primes. This is analogous to the Sato--Tate conjecture that $a_E^*(p) \coloneqq a_E(p)/\sqrt{p}$ follows a semicircular distribution as $p$ varies. More precisely, the conjecture states that if $E$ does not have complex multiplication and $-2 \le a < b \le 2$, then
\begin{equation} \label{eq: og Sato--Tate conjecture}
    \lim_{X\to\infty} \frac{\#\{ p \leq X \mid a_{E}^*(p) \in \left[a,b\right] \}}{\#\{ p \leq X \}} = \frac{1}{2\pi} \int_a^b \sqrt{4 - t^2}\ dt
    .
\end{equation}
Almost fifty years after it was first proposed, this conjecture was proved in a series of landmark papers by Barnet-Lamb, Clozel, Geraghty, Harris, Shepherd-Barron, and Taylor \cite{sato1, sato2, taylor1, taylor2}.

As is the case for $\F_p$-point counts, it turns out that matrix point counts have an expected value and an error term. More precisely, the matrix point error term is given by
\begin{equation} \label{eqn: aen}
    a_{E,n}(p) \coloneqq P(n,0)_p - N_n(E, \F_p) = -\sum_{k=1}^{n}\frac{a_E(p^k)}{p^k}P(n,k)_p,
\end{equation}
where $P(n, k)_p$ is a polynomial in $p$; see \eqref{eqn: P(n,k)}. 

When $E$ is non-CM and $p$ varies, we prove that $a_{E,n}^*(p) \coloneqq a_{E,n}(p)/p^{n^2-\frac{1}{2}}$ has a semicircular distribution, thereby recognizing the Sato--Tate conjecture as a special case of an infinite family of analogous results. On the other hand, when $E$ has CM and $p$ varies over the primes that split in the CM field, we prove that $a_{E,n}^*(p)$ has the same distribution as the finite field analogue $a_E^*(p)$.

\begin{theorem} \label{thm: ineffective theorems}
The following are true for all $n \ge 1$ and $-2 \le a < b \le 2$.
\begin{enumerate}[label=(\roman*)]
    \item If $E$/$\mathbb{Q}$ is an elliptic curve without CM, then 
    \begin{equation*}
    \lim_{X \to \infty}  \frac{\#\{p \le X \mid a_{E, n}^*(p) \in [a, b]\}}{ \# \{ p\le X\}}  = \frac{1}{2 \pi} \int_a^b \sqrt{4 - t^2} \, dt.
    \end{equation*}

    \item If $E$/$\mathbb{Q}$ is a CM elliptic curve with CM field $K$, then
    \begin{equation*}
    \lim_{X \to \infty} \frac{\#\{p \le X \mid p \text{ splits in $K$}, a_{E, n}^*(p) \in [a, b]\}}{\#\{p \le X \mid p \text{ splits in $K$}\}} = \
    \frac{1}{\pi} \int_a^b \frac{1}{\sqrt{4-t^2}} \, dt
    .
    \end{equation*}
\end{enumerate}
\end{theorem}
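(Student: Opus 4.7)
\textbf{Proof proposal for Theorem \ref{thm: ineffective theorems}.}
My plan is to reduce both parts to known equidistribution results for $a_E^*(p)$: the Sato--Tate theorem \cite{sato1,sato2,taylor1,taylor2} for part (i), and Hecke's theorem \cite{hecke} on primes splitting in the CM field for part (ii). The starting point is the identity \eqref{eqn: aen} for $a_{E,n}(p)$, combined with the Hasse bound applied to $E/\F_{p^k}$, which yields $|a_E(p^k)/p^k|\leq 2\,p^{-k/2}$. Dividing \eqref{eqn: aen} by $p^{n^2-1/2}$, the $k$-th summand in the resulting expression for $a_{E,n}^*(p)$ is bounded in absolute value by $2\,|P(n,k)_p|\cdot p^{-n^2+1/2-k/2}$. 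The strategy is to show that the $k=1$ summand captures the entire limiting distribution while every summand with $k\geq 2$ vanishes as $p\to\infty$.

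The technical heart of the argument is an asymptotic analysis of the polynomials $P(n,k)_p$ defined in \eqref{eqn: P(n,k)}. Concretely, I would establish:
\emph{(a)} $P(n,1)_p = c_n\,p^{n^2}+O(p^{n^2-1/2})$ with some explicit constant $c_n\in\{\pm 1\}$, so that
\[
-\frac{a_E(p)}{p}\cdot\frac{P(n,1)_p}{p^{n^2-1/2}}=-c_n\,a_E^*(p)+O(p^{-1/2});
\]
\emph{(b)} for each $k$ with $2\leq k\leq n$, $P(n,k)_p=o(p^{n^2+(k-1)/2})$ as $p\to\infty$, so that the corresponding $k$-th summand in $a_{E,n}^*(p)$ is $o(1)$. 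Combined, (a) and (b) give
\[
a_{E,n}^*(p)=-c_n\,a_E^*(p)+o(1)\qquad\text{as } p\to\infty.
\]

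Since the semicircle distribution $\frac{1}{2\pi}\sqrt{4-t^2}\,dt$ and the CM distribution $\frac{1}{\pi\sqrt{4-t^2}}\,dt$ on $[-2,2]$ are both symmetric about $0$ and absolutely continuous, the limiting distributions of $a_E^*(p)$ and $-c_n\,a_E^*(p)$ agree. A standard approximation argument, using that for any fixed $\epsilon>0$ only finitely many primes can violate $|a_{E,n}^*(p)+c_n\,a_E^*(p)|<\epsilon$, then transfers the limiting distribution of $-c_n\,a_E^*(p)$ to $a_{E,n}^*(p)$, whether we average over all primes (for (i)) or over the primes splitting in the CM field $K$ (for (ii)). Invoking Sato--Tate for (i) and Hecke's equidistribution theorem for (ii) then completes the proof.

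The main obstacle I anticipate is the combinatorial estimation in steps (a) and (b): identifying the degree and leading coefficient of $P(n,k)_p$ precisely, and bounding subleading terms. The definition in \eqref{eqn: P(n,k)} presumably involves $p$-multinomial coefficients, whose large-$p$ asymptotics rely on the identity $(p;p)_m=(-1)^m p^{m(m+1)/2}(1+O(p^{-1}))$ and require careful bookkeeping across potentially intricate indexing sums. A secondary, more routine, point is pinning down the sign $c_n$, which should emerge directly from the leading-order computation in (a).
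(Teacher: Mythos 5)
Your proposal is correct and follows essentially the same route as the paper: extract the $k=1$ contribution to $a_{E,n}^*(p)$, show via asymptotics of $(p;p)_m$ that it equals $a_E^*(p)+O(p^{-1/2})$ while the $k\ge 2$ terms vanish, and transfer the Sato--Tate (resp.\ Hecke--Deuring) equidistribution by an $\varepsilon$-argument. The paper packages the error as $R_n^*(p)=a_{E,n}^*(p)-a_E^*(p)$ and proves a quantitative bound $|R_n^*(p)|\le 14.1339\,p^{-1}$ (which is also reused for Theorem \ref{thm: effective theorems}), making your detour through the symmetry of the density and the undetermined sign $c_n$ unnecessary, since the leading coefficient of $P(n,1)_p$ is exactly $-1$.
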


When $E$ is non-CM with square-free conductor $N$, we make Theorem \ref{thm: ineffective theorems} explicit. This is a generalization of the case $n = 1$ obtained by Hoey, Iskander, Jin, and Suárez \cite{hoey2022unconditional}.

\begin{theorem} \label{thm: effective theorems}
Let $E/\mathbb{Q}$ be a non-CM elliptic curve with square-free conductor $N$. If $X \ge 289$ and $-2 \le a < b \le 2$, then 
\begin{equation*}
\lv \frac{\#\{p \le X \mid a_{E, n}^*(p) \in [a, b]\}}{\#\{p \le X\}} - \frac{1}{2\pi} \int_{a}^{b} \sqrt{4-t^2} \, dt \rv \le 
58.44 \frac{\log \lp N \log X \rp}{\sqrt{\log X}}
.
\end{equation*}
\end{theorem}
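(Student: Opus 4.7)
The plan is to reduce Theorem~\ref{thm: effective theorems} to the $n=1$ case---the effective Sato--Tate theorem of Hoey, Iskander, Jin, and Su\'arez \cite{hoey2022unconditional}---by establishing the uniform approximation
\[
a_{E,n}^*(p) = -2\cos\theta_p + O_n\!\left(p^{-1/2}\right)
\]
at primes $p$ of good reduction, where $\theta_p \in [0,\pi]$ is the Frobenius angle defined by $a_E(p) = 2\sqrt{p}\cos\theta_p$. Note that $-2\cos\theta_p = -a_E^*(p)$, and by the symmetry of the semicircular law about $0$, the distributional statement for $-a_E^*(p)$ is equivalent to the one for $a_E^*(p)$.

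To establish the approximation, I would start from \eqref{eqn: aen} and insert the identity $a_E(p^k) = 2p^{k/2}\cos(k\theta_p)$, rewriting
\[
a_{E,n}^*(p) = -\sum_{k=1}^{n} \frac{2\cos(k\theta_p)\, P(n,k)_p}{p^{n^2 + (k-1)/2}}.
\]
From the explicit structure of $P(n,k)_p$ (each a polynomial in $p$ of degree at most $n^2$), the $k=1$ summand equals $-2\cos\theta_p + O(p^{-1})$ after verifying the leading coefficient of $P(n,1)_p$ is $1$---which is forced by Theorem~\ref{thm: ineffective theorems}(i), since otherwise the support of the limiting distribution would be the wrong interval---while each $k \geq 2$ summand is $O(p^{-1/2})$. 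The supersingular and bad primes form a set of density zero (for non-CM $E$, at most $O(X^{3/4})$ by work of Serre), and so can be discarded with negligible loss.

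Next, a sandwich argument transfers the count. Setting $\delta = C_n X^{-1/4}$ with $C_n$ large enough that $|a_{E,n}^*(p) + 2\cos\theta_p| \leq \delta$ for all $p \geq \sqrt{X}$, one has
\[
\{p : -2\cos\theta_p \in [a+\delta, b-\delta]\} \subseteq \{p \leq X : a_{E,n}^*(p) \in [a,b]\} \subseteq \{p : -2\cos\theta_p \in [a-\delta, b+\delta]\} \cup \{p < \sqrt{X}\}.
\]
The primes below $\sqrt{X}$ contribute a proportion $O(X^{-1/2}\log X)$, negligible relative to $\log(N\log X)/\sqrt{\log X}$. Applying the effective $n=1$ bound of \cite{hoey2022unconditional} to the enlarged and shrunken intervals (and using $\mu_{ST}(-I) = \mu_{ST}(I)$ by symmetry) yields the desired main term up to an additional discrepancy of at most $\frac{2\delta}{\pi}\cdot \pi(X)$ from the enlargement, which is again $O(X^{-1/4})$ and absorbed.

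The main obstacle will be tracking the constants so as to reach the advertised $58.44$. Since that constant is independent of $n$, one must carefully confine every $n$-dependent factor to the subdominant $X^{-1/4}$ contribution---requiring uniform control on the leading coefficients of the $P(n,k)_p$---and then verify that the threshold $X \geq 289$ suffices to bound these lower-order terms by a fraction of the main error. The constant from \cite{hoey2022unconditional} (itself obtained via Beurling--Selberg majorants and minorants combined with an effective prime number theorem for the symmetric power $L$-functions $L(s, \mathrm{Sym}^k E)$) must then be augmented by an explicit margin absorbing the sandwich losses, with the final numerical optimization landing at $58.44$.
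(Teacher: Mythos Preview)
Your overall strategy---reducing to the $n=1$ effective Sato--Tate bound via a sandwich argument with cutoff $A=\sqrt{X}$---is exactly what the paper does. Two points deserve correction.

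First, a sign slip: the leading coefficient of $P(n,1)_p$ is $-1$, not $1$ (the $s=0$ term in \eqref{eqn: P(n,k)} gives $-p^{n^2-n+1}\cdot\frac{1-p^n}{1-p}$, with top term $-p^{n^2}$). Consequently $a_{E,n}^*(p) = a_E^*(p) + R_n^*(p)$, not $-a_E^*(p) + R_n^*(p)$. Your symmetry detour is therefore unnecessary; the paper works directly with $a_E^*(p)$.

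Second, and more seriously, your error estimate $R_n^*(p)=O_n(p^{-1/2})$ is both weaker than the truth and carries an $n$-dependent implied constant. This is the real obstacle you flag in your final paragraph but do not resolve: with a threshold $X\ge 289$ and target constant $58.44$ that are \emph{independent of $n$}, an $n$-dependent sandwich width $\delta=C_n X^{-1/4}$ cannot be absorbed uniformly. The paper's substance lies precisely here. By bounding the $q$-Pochhammer ratios via $0.5601<(p^{-1};p^{-1})_m<1$ (Lemma~\ref{lemma: pochhammer simplify + bound}) and comparing the sums over $s$ and $k$ to geometric series, the paper obtains the \emph{uniform} bound $|R_n^*(p)|\le 14.1339\,p^{-1}$ for all $n\ge 1$ and $p\ge 3$ (Lemma~\ref{lemma: explicit constant}). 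This gives $\delta=O(X^{-1/2})$ with an absolute constant, which together with \eqref{eqn: X^-1/2 bound} is what allows the extra sandwich and small-prime losses to fit inside the margin $58.44-58.1$. Without that uniform-in-$n$ Pochhammer analysis, your argument does not close for general $n$.
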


We also study the distributions of matrix point error terms for the $K3$ surfaces defined in \eqref{eq: function field}. Ono, Saad, and Saikia \cite{onosaadsaikia} pioneered the study of the statistics of $A_{\lambda}^*(p)$ by determining the limiting distribution as $\lambda$ varies. On the other hand, Chen, Shen, and Thorner \cite{chenshenthorner} determined the distributions of these quantities as $p$ varies and $\lambda$ is fixed. These distributions depend on whether the associated Clausen elliptic curve has complex multiplication and whether $\lambda + 1$ is a rational square. 

As in the elliptic curve case, $N_n(X_\lambda, \mathbb{F}_p)$ has an expected value and an error term. More precisely, the matrix point error term is given by
\begin{equation} \label{eqn: alambdan}
    A_{\lambda,n}(p) \coloneqq Q(n, 0, \phi_p(\lambda + 1)) + R(n, \phi_p(\lambda + 1))_p - N_{n}(X_\lambda, \F_p)
    ,
\end{equation}
where $Q(n, k, \gamma)_p$ and $R(n, \gamma)_p$ are polynomials in $p$ and $\gamma$ \cite[Theorem.\ 1.4]{huangonosaad}. We prove that when $p$ varies and $\lambda$ is fixed, the normalized matrix point error term $A_{\lambda,n}^*(p) \coloneqq A_{\lambda,n}(p) / p^{n^2+n-1}$ has the same distribution as the finite field analogue $A_\lambda^*(p)$.

\begin{theorem} \label{thm: K3 bounding theorems}
If $n \geq 1$ and $\lambda \in \mathbb{Q} \setminus \{0, -1\}$, then the following are true with $B_1(t), B_2(t), B_3(t)$, and $B_4(t)$ as defined in \eqref{eqn: B_1 definition} through \eqref{eqn: B_4 definition} and $K$ as defined in Table \ref{table: K3 CM fields}.
\begin{enumerate} [label=(\roman*)]
    \item If $\lambda \notin (\mathbb{Q}^2 - 1) \cup \{ 1/8, 1, -1/4, -1/64, -4, -64 \}$ and $-3 \leq a < b \leq 3$, then
    \begin{equation*}
        \lim_{X \to \infty} \frac{\# \{ p \leq X \mid A_{\lambda,n}^*(p) \in [a, b] \}}{\# \lb p \leq X \rb} = \int_a^b B_1(t) \, dt. 
    \end{equation*}

    \item If $\lambda \in (\mathbb{Q}^2 - 1) \setminus \{ 0, -1, 8 \}$ and $-1 \leq a < b \leq 3$, then 
    \begin{equation*}
        \lim_{X \to \infty} \frac{\# \{ p \le X \mid A_{\lambda,n}^*(p) \in [a, b]\}}{\# \{p \le X \}} = \int_a^b B_2(t) \, dt.
    \end{equation*}

    \item If $\lambda \in \{ 1/8, 1, -1/4, -1/64 \}$ and $-3 \leq a < b \leq 3$, then
    \begin{equation*}
        \lim_{X \to \infty} \frac{\# \{ p \leq X \mid p \text{ splits in $K$}, A_{\lambda, n}^*(p) \in [a,b] \}}{\# \{ p \leq X \mid p \text{ splits in $K$} \}} = \int_{a}^{b} B_3(t) \, dt.
    \end{equation*}

    \item If $\lambda \in \{-4, 8 -64 \}$ and $-1 \leq a < b \leq 3$, then 
    \begin{equation*}
        \lim_{X \to \infty} \frac{\# \{ p \le X \mid p \text{ splits in $K$}, A_{\lambda,n}^*(p) \in [a, b]\}}{\# \{p \le X \mid p \text{ splits in $K$} \}} = \int_a^b B_4(t) \, dt.
    \end{equation*}
\end{enumerate}
\end{theorem}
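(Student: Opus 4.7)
The plan is to reduce the four distribution statements to the corresponding results of Chen, Shen, and Thorner \cite{chenshenthorner} for the finite field analogue $A_\lambda^*(p)$, whose limiting distributions are precisely the densities $B_1,B_2,B_3,B_4$ in the four regimes of $\lambda$ (and, in the CM cases, for primes split in $K$). The reduction proceeds by establishing a pointwise asymptotic identity of the form
\begin{equation*}
A_{\lambda,n}^*(p) \;=\; A_\lambda^*(p) + O_{n}\!\lp p^{-1/2} \rp
\end{equation*}
valid for all sufficiently large primes $p$ with $\gcd(p, 6\lambda(\lambda+1)) = 1$, from which the four parts of the theorem follow formally.

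To obtain this identity, I would start from the Huang--Ono--Saad formula \cite[Theorem 1.4]{huangonosaad} for $N_n(X_\lambda, \F_p)$, which expresses the matrix point count as a polynomial in $p$, $\gamma = \phi_p(\lambda+1)$, and the higher-power Frobenius traces $A_\lambda(p^k)$ for $1 \le k \le n$ (equivalently, the traces of the symmetric square of the associated Clausen curve). Combining with the definition \eqref{eqn: alambdan}, which subtracts off the expected value $Q(n,0,\gamma)_p + R(n,\gamma)_p$, one writes
\begin{equation*}
A_{\lambda,n}(p) \;=\; \sum_{k=1}^n C_k(n, \gamma)_p \cdot A_\lambda(p^k),
\end{equation*}
for explicit polynomial coefficients $C_k(n, \gamma)_p$ of degree at most $n^2 + n - 1 - k$ in $p$, in direct analogy with the expansion \eqref{eqn: aen} in the elliptic curve setting. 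The key task is a degree-and-coefficient analysis: the $k = 1$ summand must contribute $A_\lambda(p) \cdot p^{n^2 + n - 2}(1 + O(p^{-1}))$ with leading coefficient exactly $+1$, while the summands with $k \ge 2$ are bounded by $O_n(p^{n^2 + n - 3/2})$ using the trivial bound $|A_\lambda(p^k)| \ll p^k$. Dividing by $p^{n^2 + n - 1}$ then yields the displayed comparison.

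Once this comparison is in hand, each of the four parts follows from a standard continuous-mapping argument: for any interval $[a, b]$ at which the relevant density $B_i$ is continuous, the set of primes on which $A_{\lambda, n}^*(p) \in [a,b]$ differs from $A_\lambda^*(p) \in [a,b]$ has natural density zero, so the Chen--Shen--Thorner distributions transfer directly. In the CM cases (iii) and (iv), the restriction to primes split in $K$ is imposed identically on both sides and does not interact with the comparison.

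The main obstacle is the combinatorial identification of the leading $p$-degree in the coefficients $C_k(n, \gamma)_p$. The Huang--Ono--Saad formula involves sums over six-tuples of partitions with $(p;p)_m$-denominators, closely paralleling the expression in Theorem \ref{thm: k3 explicit formula}, and one must check that the maximal power of $p$ in the $k = 1$ coefficient arises from a single dominant partition pattern with coefficient exactly $+1$ --- this is what pins down the correct normalization $p^{n^2 + n - 1}$ and ensures the limit is $A_\lambda^*(p)$ itself rather than a nonzero multiple thereof. Once this bookkeeping is carried out, the remaining arguments are soft, and the four densities $B_1, \dots, B_4$ emerge from \cite{chenshenthorner} without further work.
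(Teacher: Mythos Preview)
Your proposal is correct and follows essentially the same two-step strategy as the paper: establish $A_{\lambda,n}^*(p) = A_\lambda^*(p) + o(1)$ and then transfer the Chen--Shen--Thorner distributions via an $\varepsilon$-argument. The one simplification you are missing is that the comparison you flag as the ``main obstacle'' is already available as \cite[Corollary~1.5]{huangonosaad}, which the paper simply cites (see \eqref{equation: normalized K3 matrix points}) to obtain $A_{\lambda,n}^*(p) = A_\lambda^*(p) + O_n(p^{-1})$; so no fresh degree-and-coefficient analysis of the partition sum is required.
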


This paper is organized as follows. In Section \ref{section: 2}, we outline some key facts about elliptic curves, the $K3$ surfaces $X_\lambda$, and matrix points. In Section \ref{section: 3}, we derive some explicit bounds needed in the proof of Theorem \ref{thm: effective theorems}. In Section \ref{section: 4},  we prove Theorems \ref{thm: supersingular formula}, \ref{thm: k3 explicit formula}, \ref{thm: ineffective theorems}, \ref{thm: effective theorems}, and \ref{thm: K3 bounding theorems}. Finally, in Section \ref{section: 5}, we illustrate Theorems \ref{thm: ineffective theorems} and \ref{thm: K3 bounding theorems} with numerical examples. 

\section*{Acknowledgements}

The authors would like to thank Yifeng Huang and Hasan Saad for advising and supporting this project, as well as Eleanor McSpirit for her helpful comments and suggestions. The authors would also like to thank Hasan Saad for producing the histograms for Section 5. The authors were participants in the 2023 UVA REU in Number Theory. They would like to thank Ken Ono for running this program and for his guidance and advice. They are grateful for the support of grants from Jane Street Capital, the National Science Foundation (DMS-2002265 and DMS-2147273), the National Security Agency (H98230-23-1-0016), and the Templeton World Charity Foundation. The authors used Wolfram Mathematica to simplify computations.

\section{Arithmetic Geometric Preliminaries} \label{section: 2}

In order to obtain our results, we exploit information about $\F_p$-point counts and their connection to matrix point counts. To this end, this section recalls key facts about elliptic curves, $K3$ surfaces, and matrix points.

\subsection{Elliptic Curves} \label{sec: 2.1} 
We collect some results concerning distributions of $\F_p$-point counts on elliptic curves. 

If $E$ has CM, then $a_E^*(p) = 0$ for all primes inert in the CM field $K$. On the other hand, for primes that split in $K$, work of Hecke \cite{hecke} and Deuring \cite{silvermanbook} implies the following distribution. Namely, if $-2 \le a < b \le 2$, then
\begin{equation} \label{eqn: cm limiting distribution}
\lim_{X \to \infty} \frac{\# \{ p \leq X \mid p \text{ splits in } K, a_E^*(p) \in [a, b] \}}{\# \{ p \leq X \mid p \text{ splits in } K \}} = \frac{1}{\pi} \int_a^b \frac{1}{\sqrt{4-t^2}} \, dt . 
\end{equation}

If $E$ does not have CM, we recall that $a_E^*(p)$ has a semicircular distribution \eqref{eq: og Sato--Tate conjecture}. In Theorem \ref{thm: effective theorems}, we explicitly bound the difference between the distribution of $a_{E,n}^*(p)$ for some elliptic curves and the semicircular distribution. To do this, we use the explicit bound for the case $n = 1$.

\begin{theorem}[Th.\ 1.1 of \cite{hoey2022unconditional}]
If $E/\Q$ is a non-CM elliptic curve with square-free conductor $N$, $X \geq 3$, and $-2 \le a < b \le 2$, then
\begin{equation} \label{eqn: ST bound}
    \lv \frac{\# \{ p \leq X \mid a_E^*(p) \in [a, b] \}}{\# \{ p \leq X \}} - \frac{1}{2\pi} \int_{a}^{b} \sqrt{4 - t^2} \, dt \rv \leq 58.1\frac{\log \lp N \log X \rp}{\sqrt{\log X}} 
    .
\end{equation}
\end{theorem}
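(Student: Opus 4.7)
The plan is to use the Beurling--Selberg / Vaaler polynomial method, the standard route to effective Sato--Tate statements. Writing $a_E^*(p) = 2 \cos \theta_p$ with $\theta_p \in [0, \pi]$ (well-defined by the Hasse bound), the semicircular measure $\frac{1}{2\pi}\sqrt{4-t^2}\,dt$ on $[-2,2]$ pulls back to the Sato--Tate measure $\frac{2}{\pi}\sin^2\theta\, d\theta$ on $[0,\pi]$, and the task becomes an effective equidistribution statement for the Frobenius angles $\theta_p$.

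For a parameter $M \geq 1$ to be chosen later, I would construct Vaaler-type trigonometric polynomials $F_M^\pm(\theta)$ of degree at most $M$ that majorize and minorize the indicator function of $[\arccos(b/2),\, \arccos(a/2)] \subset [0,\pi]$ with $L^1(\sin^2\theta\, d\theta)$-error of order $1/M$. Expanding $F_M^\pm$ in the basis of Chebyshev polynomials of the second kind via $U_m(\cos\theta) = \sin((m+1)\theta)/\sin\theta$, and invoking the Eichler--Shimura--Weil relations in the form $U_m(a_E^*(p)/2) = a_{\mathrm{Sym}^m E}(p)/p^{m/2}$, one reduces the problem to bounding the normalized prime sums
\begin{equation*}
    S_m(X) \coloneqq \sum_{p \leq X} \frac{a_{\mathrm{Sym}^m E}(p)}{p^{m/2}}
\end{equation*}
uniformly for $1 \leq m \leq M$.

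Since $E$ is non-CM with square-free conductor $N$, the symmetric power $L$-functions $L(s, \mathrm{Sym}^m E)$ are the standard $L$-functions of cuspidal automorphic representations of $\mathrm{GL}_{m+1}(\mathbb{A}_\Q)$ (this is the symmetric-power functoriality theorem, unconditional in the square-free conductor setting). An effective prime number theorem for $L(s, \mathrm{Sym}^m E)$, derived from a standard zero-free region together with a log-free zero density estimate tracked uniformly in the analytic conductor $\mathfrak{q}(\mathrm{Sym}^m E)$, yields a bound of the shape
\begin{equation*}
    \lv S_m(X) \rv \ll \frac{X}{\log X}\,\exp\!\lp -\frac{c\,\sqrt{\log X}}{\log(NM)}\rp
\end{equation*}
for all $m \leq M$ in an admissible range. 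Combining the majorant/minorant sandwich with this bound, summing the $M$ resulting contributions, and optimizing by taking $M$ proportional to $\sqrt{\log X}/\log(N\log X)$ produces a total error of the claimed order $\log(N\log X)/\sqrt{\log X}$.

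The principal obstacle---and where essentially all the numerical effort resides---is producing a prime number theorem for $L(s, \mathrm{Sym}^m E)$ with totally explicit constants depending transparently on $N$ and $m$, uniformly in the range $m \leq M$. This requires explicit versions of the conductor bound $\mathfrak{q}(\mathrm{Sym}^m E) \ll N^m$, a classical Vinogradov-type zero-free region, and quantitative control of potential Landau--Siegel zeros across the family $\{\mathrm{Sym}^m E\}_{m \leq M}$. Carrying every constant through the Vaaler construction, the Chebyshev expansion, and the explicit zero-density input, and then balancing against $M$, is what ultimately delivers the explicit coefficient $58.1$.
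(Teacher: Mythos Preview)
The paper does not prove this statement at all: it is quoted verbatim as Theorem~1.1 of \cite{hoey2022unconditional} and used as a black box in the proof of Theorem~\ref{thm: effective theorems}. There is therefore no ``paper's own proof'' to compare your sketch against.

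That said, your outline is broadly the correct strategy for how the cited result is actually obtained: one rewrites the problem in terms of Frobenius angles, sandwiches the indicator of the target interval between trigonometric polynomials of degree $M$ with $O(1/M)$ error, expresses the resulting sums via Chebyshev polynomials as sums over $\mathrm{Sym}^m E$ coefficients, and then inputs an explicit prime number theorem for $L(s,\mathrm{Sym}^m E)$ uniform in $m\le M$. The crucial arithmetic ingredient you name---automorphy of all symmetric powers---is the Newton--Thorne theorem, and the square-free conductor hypothesis enters because it makes the analytic conductor of $\mathrm{Sym}^m E$ explicitly controllable as $N^{O(m)}$. Optimizing $M$ then yields the $\log(N\log X)/\sqrt{\log X}$ shape. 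So your sketch is a faithful high-level summary of the argument in \cite{hoey2022unconditional}, but none of that work is reproduced in the present paper.
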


\begin{remark}
    In some of our proofs, it will be convenient to denote the Sato--Tate distribution by 
    \begin{equation}
        f_{ST}(t) \coloneqq
        \begin{cases}
            \frac{1}{2\pi} \sqrt{4 - t^2} & \text{if } -2 \leq t \leq 2\\
            0 & \text{else}.
        \end{cases}
    \end{equation}
\end{remark}

\subsection{K3 Surfaces} \label{sec: 2.2}

As previously discussed, each $K3$ surface $X_\lambda$ has an associated Clausen elliptic curve $E_{-\lambda/(\lambda + 1)}^{\text{CL}}$. Results on this family of $K3$ surfaces are dependent on whether $\lambda+1$ is a rational square and whether $E_{-\lambda/(\lambda + 1)}^{\text{CL}}$ has CM. When $E_{-\lambda/(\lambda + 1)}^{\text{CL}}$ has CM, our distributions depend on its CM field $K$. The CM fields for all $\lambda$ where this applies are listed in Table \ref{table: K3 CM fields}.

\bgroup
\def\arraystretch{1.75}%
\begin{table}[H]
\begin{center}
\caption{\label{table: K3 CM fields}$\lambda$ and $K$ when $E_{-\lambda/(\lambda + 1)}^{\text{CL}}$ has CM}
\begin{tabular}{|p{3.5em}|p{3.5em}|p{3.5em}|p{3.5em}|p{3.5em}|p{3.5em}|p{3.5em}|p{3.5em}|} 
 \hline
 $\lambda$ & $8$ & $1/8$ & $1$ & $-4$ & $-1/4$ & $-64$ & $-1/64$ \\ 
 \hline
 $K$ & $\Q(\sqrt{-1})$ & $\Q(\sqrt{-1})$ & $\Q(\sqrt{-2})$ & $\Q(\sqrt{-3})$ & $\Q(\sqrt{-3})$ & $\Q(\sqrt{-7})$ & $\Q(\sqrt{-7})$ \\
 \hline
\end{tabular}
\end{center}
\end{table}
\egroup

We now define the probability density functions\footnote{Each of these density functions is named as a variation of Batman because of its shape. Histograms overlaid with these density functions are given in Section \ref{section: 5}.} used in Theorem \ref{thm: K3 bounding theorems}:
\begin{align}
    B_1(t) &\coloneqq 
    \begin{cases}
        \frac{3 + t}{4\pi\sqrt{3-2t-t^2}} + \frac{3 - t}{4\pi\sqrt{3+2t-t^2}} &\quad \lv t \rv < 1
        \\ \frac{3 - \lv t \rv }{4\pi\sqrt{3+2\lv t \rv -t^2}} &\quad 1 \leq \lv t \rv \leq 3
        \\ 0 &\quad \text{otherwise}
    \end{cases} & \text{(original Batman)}
    \label{eqn: B_1 definition}
    \\ B_2(t) &\coloneqq 
    \begin{cases} 
        \frac{1}{2\pi}\sqrt{\frac{3-t}{1+t}} &\hspace{88pt} -1 < t \leq 3
        \\ 0 &\hspace{88pt} \text{otherwise}
    \end{cases} & \text{(half-Batman)}
    \label{eqn: B_2 definition}
    \\ B_3(t) &\coloneqq 
    \begin{cases}
        \frac{1}{2\pi \sqrt{3-2t-t^2}} + \frac{1}{2\pi \sqrt{3+2t-t^2}} &\quad \lv t \rv < 1
        \\ \frac{1}{2\pi \sqrt{3+2\lv t \rv -t^2}} &\quad 1 \leq \lv t \rv \leq 3
        \\ 0 &\quad \text{otherwise}
    \end{cases} & \text{(flying Batman)}
    \label{eqn: B_3 definition}
    \\ B_4(t) &\coloneqq 
    \begin{cases} 
        \frac{1}{\pi \sqrt{3+2t-t^2}} & \hspace{80pt} -1 < t < 3
        \\ 0 & \hspace{80pt} \text{otherwise}
    \end{cases} & \text{(half-flying Batman)}
    \label{eqn: B_4 definition}
\end{align}

In order to determine the distribution of $A_{\lambda, n}^*(p)$, we use the following result concerning the distribution of $A_\lambda^*(p)$.

\begin{theorem}[Th.\ 2.1 of \cite{chenshenthorner}] \label{thm: 2.2}
The following are true.
\begin{enumerate} [label=(\roman*)]
    \item If $\lambda \notin (\mathbb{Q}^2 - 1) \cup \{ 1/8, 1, -1/4, -1/64, -4, -64 \}$ and $-3 \leq a < b \leq 3$, then 
    \begin{equation} \label{B1}
        \lim_{X \to \infty} \frac{\# \{ p \le X \mid A^*_{\lambda}(p) \in [a, b]\}}{\# \{p \le X \}} = \int_a^b B_1(t) \, dt
        .
    \end{equation} 

    \item If $\lambda \in (\mathbb{Q}^2 - 1) \setminus \{ 0, -1, 8 \}$ and $-1 \leq a < b \leq 3$, then
    \begin{equation} \label{B2}
        \lim_{X \to \infty} \frac{\# \{ p \le X \mid A^*_{\lambda}(p) \in [a, b]\}}{\# \{p \le X \}} = \int_a^b B_2(t) \, dt
        .
    \end{equation}
    
    \item If $\lambda \in \{ 1/8, 1, -1/4, -1/64 \}$ and $-3 \leq a < b \leq 3$, then 
    \begin{equation} \label{B3}
        \lim_{X \to \infty} \frac{\# \{ p \leq X \mid p \text{ splits in } K, A^*_{\lambda}(p) \in [a,b] \}}{\# \{ p \leq X \mid p \text{ splits in } K \}} = \int_{a}^{b} B_3(t) \, dt
        .
    \end{equation}

    \item If $\lambda \in \{-4, 8, 64 \}$ and $-1 \leq a < b \leq 3$, then
    \begin{equation} \label{B4}
        \lim_{X \to \infty} \frac{\# \{ p \le X \mid p \text{ splits in } K, A^*_{\lambda}(p) \in [a, b]\}}{\# \{p \le X \mid p \text{ splits in } K \}} = \int_a^b B_4(t) \, dt
        .
    \end{equation}
\end{enumerate}
\end{theorem}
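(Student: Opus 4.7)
The plan is to reduce the distribution of $A_\lambda^*(p)$ to that of the normalized Frobenius trace $a_E^*(p)$ of the associated Clausen elliptic curve $E_{-\lambda/(\lambda+1)}^{\text{CL}}$, exploiting the fact recalled in the introduction that the zeta function of $X_\lambda$ is essentially the symmetric square of that of this Clausen curve. More precisely, the first step is to establish a pointwise identity of the shape
\begin{equation*}
A_\lambda^*(p) \;=\; \gamma \cdot \lp a_E^*(p)^2 - 1 \rp, \qquad \gamma = \phi_p(\lambda+1),
\end{equation*}
valid at all good primes. Writing the Frobenius eigenvalues of the Clausen curve as $\alpha, \overline\alpha$ with $\alpha + \overline\alpha = a_E(p)$ and $\alpha\overline\alpha = p$, the trace on $\mathrm{Sym}^2$ is $\alpha^2 + \alpha\overline\alpha + \overline\alpha^2 = a_E(p)^2 - p$, which after normalizing by $p$ gives $a_E^*(p)^2 - 1$; the Legendre factor $\gamma$ arises from the twist by the quadratic character of $\Q(\sqrt{\lambda+1})$ in the decomposition of $H^2(X_\lambda, \Q_\ell)$ used by Ahlgren--Ono--Penniston. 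This step is essentially bookkeeping once the cohomological decomposition is in hand.

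Once this identity is available, the remainder is an equidistribution-plus-pushforward calculation. In case (i), the Clausen curve is non-CM and $\lambda+1$ is not a rational square, so $\gamma$ takes the values $\pm 1$ each with density $1/2$ by Chebotarev. Sato--Tate for $E_{-\lambda/(\lambda+1)}^{\text{CL}}$ gives that $a_E^*(p)$ is $f_{ST}$-distributed, and one needs joint equidistribution of $a_E^*(p)$ with $\phi_p(\lambda+1)$; this follows from Sato--Tate for the quadratic twist $E^{(\lambda+1)}$, which is again a non-CM elliptic curve. In case (ii), $\gamma \equiv 1$ so only the pushforward by $t \mapsto t^2 - 1$ is needed. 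Cases (iii) and (iv) are analogous but use Hecke's equidistribution theorem on split primes: $a_E^*(p)$ restricted to primes splitting in the CM field $K$ has the arcsine density $\frac{1}{\pi\sqrt{4-t^2}}$ on $[-2,2]$, and in (iii) one additionally needs joint equidistribution with $\phi_p(\lambda+1)$, which follows because the Hecke Gr\"ossencharacter governing $E$ and the quadratic character of $\Q(\sqrt{\lambda+1})$ correspond to independent quotients of the appropriate ray class group (equivalently, from Chebotarev applied to the compositum).

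The final step is the change of variables. With $u = t^2 - 1$ and $t = \pm\sqrt{u+1}$, the pushforward of a density $f(t)$ on $[-2,2]$ under $t \mapsto t^2-1$ is
\begin{equation*}
g(u) \;=\; \frac{f(\sqrt{u+1}) + f(-\sqrt{u+1})}{2\sqrt{u+1}}, \qquad u \in [-1,3].
\end{equation*}
Applying this to $f = f_{ST}$ yields $B_2(u) = \tfrac{1}{2\pi}\sqrt{(3-u)/(u+1)}$, proving (ii), while applying it to $f(t) = \tfrac{1}{\pi\sqrt{4-t^2}}$ yields $B_4(u) = \tfrac{1}{\pi\sqrt{3+2u-u^2}}$, proving (iv). The twist by an independent $\pm 1$ symmetrizes the density about the origin: one checks using $3-2t-t^2 = (3+t)(1-t)$ and $3+2t-t^2 = (3-t)(1+t)$ that $\tfrac{1}{2}(B_2(t) + B_2(-t)) = B_1(t)$ on $(-1,1)$ and each piece contributes alone on $[1,3]$ and $[-3,-1]$, giving (i); the same symmetrization turns $B_4$ into $B_3$, giving (iii).

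The main obstacle is the joint equidistribution in cases (i) and (iii). In (i) this is the statement that Sato--Tate holds for the quadratic twist, which is a non-CM elliptic curve and hence covered by the Barnet-Lamb--Geraghty--Harris--Taylor results; in (iii) it requires ruling out any correlation between the Hecke equidistribution on split primes and the quadratic splitting character $\phi_p(\lambda+1)$, which for each of the seven exceptional $\lambda$ in Table~\ref{table: K3 CM fields} can be verified directly from the explicit Gr\"ossencharacter attached to the CM curve.
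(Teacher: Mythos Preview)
The paper does not give its own proof of this statement: Theorem~\ref{thm: 2.2} is quoted verbatim as Theorem~2.1 of Chen--Shen--Thorner \cite{chenshenthorner} and is used as a black box in the proof of Theorem~\ref{thm: K3 bounding theorems}. So there is no in-paper argument to compare your proposal against.

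That said, your outline is essentially the strategy used in \cite{chenshenthorner}: the identity $A_\lambda^*(p) = \phi_p(\lambda+1)\bigl(a_E^*(p)^2 - 1\bigr)$ for the Clausen curve, followed by pushforward of the Sato--Tate or Hecke density under $t \mapsto t^2 - 1$, and symmetrization by the independent sign $\phi_p(\lambda+1)$. Your change-of-variables computations recovering $B_2$ and $B_4$, and the verification that $\tfrac{1}{2}\bigl(B_2(t)+B_2(-t)\bigr)=B_1(t)$ and $\tfrac{1}{2}\bigl(B_4(t)+B_4(-t)\bigr)=B_3(t)$, are correct.

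The one point that is genuinely underspecified is the joint equidistribution in case~(i). Knowing Sato--Tate for $E$ and separately for its quadratic twist $E^{(\lambda+1)}$ does not by itself yield independence of $a_E^*(p)$ and $\phi_p(\lambda+1)$: both marginals being semicircular is weaker than joint equidistribution in $[-2,2]\times\{\pm 1\}$. What is actually needed is the analytic continuation and nonvanishing on $\mathrm{Re}(s)=1$ of the twisted symmetric-power $L$-functions $L(s,\mathrm{Sym}^m E \otimes \chi)$ for the quadratic character $\chi$ attached to $\Q(\sqrt{\lambda+1})$, which does follow from the potential-automorphy machinery of Barnet-Lamb--Geraghty--Harris--Taylor but is a stronger input than ``Sato--Tate for the twist.'' Your treatment of case~(iii) by direct inspection of the finitely many CM Gr\"ossencharacters is fine in principle, though it too is only a sketch.
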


\subsection{Matrix Points} \label{sec: 2.3}
In order to prove Theorems \ref{thm: supersingular formula} and \ref{thm: k3 explicit formula}, we recall the formulas for matrix point counts on elliptic curves \eqref{eqn: aen} and $K3$ surfaces \eqref{eqn: alambdan}. To find an explicit matrix point count formula when $p$ is supersingular, we use more convenient versions of these formulas, as stated in the proofs of \cite[Theorem 1.1]{huangonosaad} and \cite[Theorem 1.4]{huangonosaad}. These formulas involve the eigenvalues of Frobenius $\alpha$ and $\overline{\alpha}$, which satisfy $\alpha + \overline{\alpha} = a_E(p)$ and $\alpha \overline{\alpha} = p$. More precisely, if $E$ is an elliptic curve, then we have
\begin{equation} \label{eq: matrix pt with eigenvalue}
N_{n}(E, \mathbb{F}_q) = (-1)^n p^{\frac{n(n-1)}{2}}(p;p)_n \sum_{\substack{r + s + u = n \\ r, s, u \ge 0}} \frac{(-1)^u \alpha^{r - s}p^{s + \frac{u(u + 1)}{2}}}{(p ; p)_r(p; p)_s(p; p)_u}
,
\end{equation}
where $\alpha$ is a Frobenius eigenvalue for $E$. For the $K3$ surfaces $X_\lambda$, we have
\begin{equation} \label{eq: K3 matrix pts}
\small
    N_{n}(X_{\lambda}, \mathbb{F}_p) = \sum_{r=0}^n b_{n-r} \hspace{-10pt} \sum_{\substack{\lambda_1 ,\dots, \lambda_6 \\ \lv\lambda_1\rv + \dots + \lv\lambda_6\rv = r}} \hspace{-10pt} (-1)^{l(\lambda_1) + \dots + l(\lambda_6)} \frac{p^{\frac{\sum n(\lambda_i , j)(n(\lambda_i , j) - 1)}{2}+2l(\lambda_3) + l(\lambda_4)}}{\prod (p;p)_{n(\lambda_i , j)}} \gamma^{l(\lambda_4) + l(\lambda_5) + l(\lambda_6)} \alpha^{2l(\lambda_5)} \overline{\alpha}^{2l(\lambda_6)}
    ,
\end{equation}
with $\lambda_i$, $n(\lambda_i, j)$, $b_r$, and $\gamma$ as in Theorem \ref{thm: k3 explicit formula} and where $\alpha$ and $ \overline{\alpha}$ are the eigenvalues of Frobenius for the associated Clausen elliptic curve $E_{\frac{-\lambda}{\lambda + 1}}^{\text{CL}}$. 

To obtain results about the distribution of $a_{E, n}^*(p)$, we observe that the difference between $a_{E, n}^*(p)$ and $a_E^*(p)$ vanishes as $p$ approaches infinity. In order to show this, we first recall  \cite[Theorem 1.1]{huangonosaad} that the polynomials $P(n, k)_p$ in \eqref{eqn: aen} are given by
\begin{equation} \label{eqn: P(n,k)}
    P(n, k)_p \coloneqq (-1)^k p^{n(n-k) + \frac{k(k+1)}{2}} \sum\limits_{s=0}^{\floor{\frac{n-k}{2}}} p^{2ks - 2ns + 2s^2} \frac{(p;p)_n}{(p;p)_s(p;p)_{s+k}(p;p)_{n-2s-k}}
    .
\end{equation}
We denote the difference between $a_{E,n}^*(p)$ and $a_E^*(p)$ by
\begin{equation} \label{equation: a_E, n}
    R_n^*(p) \coloneqq a_{E,n}^*(p) - a_E^*(p).
\end{equation}
We find an explicit bound for $R_n^*(p)$ in Lemma \ref{lemma: explicit constant}. 

We similarly observe that the difference between $A_{\lambda, n}^*(p)$ and $A^*_{\lambda}(p)$ vanishes as $p$ approaches infinity by combining \cite[Corollary 1.5]{huangonosaad} and definitions of $A_{\lambda}^*(p)$ from \cite[Section 1]{chenshenthorner}. More precisely, we have
\begin{equation} \label{equation: normalized K3 matrix points}
A_{\lambda, n}^*(p) = A^*_{\lambda}(p) + O_n(p^{-1}).
\end{equation}

These bounds allow us to determine the Sato--Tate type distributions of the matrix point count error for elliptic curves and $K3$ surfaces using information from the analogous finite field cases.

\section{Explicit Bounds}\label{section: 3}


To explicitly bound the difference $R_n^*(p)$ between $a_{E,n}^*(p)$ and $a_E^*(p)$, we first bound several combinatorial expressions involving the $q$-Pochhammer symbol.

\begin{lemma} \label{lemma: pochhammer simplify + bound}
    If $p\geq 3$ and $n\geq 1$, then 
    $$
    (p;p)_n=(-1)^n p^{\frac{n(n+1)}{2}} (p^{-1};p^{-1})_n
    $$
    and 
    $$
    0.5601 < \lp p^{-1}; p^{-1} \rp_n < 1
    .
    $$
\end{lemma}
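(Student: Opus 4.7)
The plan is to verify the identity by direct algebraic manipulation and then analyze the monotonicity of $(p^{-1}; p^{-1})_n$ in both $p$ and $n$ to reduce the bound to a single explicit infinite product.

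First, I would prove the identity by expanding both sides according to the definition. Since $\sum_{i=1}^n i = n(n+1)/2$, one can write
\[
(-1)^n p^{\frac{n(n+1)}{2}} \lp p^{-1}; p^{-1} \rp_n = (-1)^n \prod_{i=1}^n p^i \lp 1 - p^{-i} \rp = (-1)^n \prod_{i=1}^n \lp p^i - 1 \rp = \prod_{i=1}^n \lp 1 - p^i \rp,
\]
which is exactly $(p;p)_n$. This step is routine, so I would not dwell on it.

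Next, for the upper bound, each factor satisfies $0 < 1 - p^{-i} < 1$ since $p \geq 3$ and $i \geq 1$, and so $(p^{-1}; p^{-1})_n < 1$ immediately. For the lower bound, I would observe two monotonicity facts: (i) the product is decreasing in $n$, since adding another factor multiplies by something in $(0,1)$; and (ii) the product is increasing in $p$, because each factor $1 - p^{-i}$ is increasing in $p$. Together these show that among all admissible pairs $(p,n)$ with $p \geq 3$ and $n \geq 1$, the infimum of $(p^{-1};p^{-1})_n$ equals the infinite Euler product $\prod_{i=1}^\infty (1 - 3^{-i})$.

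It remains to establish $\prod_{i=1}^\infty (1 - 3^{-i}) > 0.5601$. The main (and essentially only) obstacle here is a numerical verification, which I would handle as follows: compute the partial product $\prod_{i=1}^{N}(1 - 3^{-i})$ explicitly for a reasonable $N$ (say $N = 10$ or $N=15$), and bound the tail using the Weierstrass inequality
\[
\prod_{i=N+1}^\infty \lp 1 - 3^{-i} \rp \geq 1 - \sum_{i=N+1}^\infty 3^{-i} = 1 - \frac{3^{-N}}{2}.
\]
For moderate $N$ the tail factor is extremely close to $1$, and the partial product is numerically about $0.560126\ldots$, so combining the two gives a rigorous lower bound strictly greater than $0.5601$. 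The hardest part is simply carrying out enough digits of the partial product carefully to guarantee that the strict inequality survives the tail estimate; this is a finite computation and raises no conceptual difficulty.
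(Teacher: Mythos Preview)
Your proof is correct and follows essentially the same route as the paper: a direct factor-by-factor manipulation for the identity, then monotonicity in $n$ and in $p$ to reduce the lower bound to $(3^{-1};3^{-1})_\infty$. The paper simply asserts $0.5601 < (3^{-1};3^{-1})_\infty$ without justifying the numerical value, so your Weierstrass-inequality tail estimate is, if anything, more careful than what appears there.
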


\begin{proof}
    Expressing the $q$-Pochhamer symbols explicitly, we have
    $$
    (p;p)_n = \prod_{i=1}^n \lp 1-p^i \rp = \prod_{i=1}^n \lp -p^i \rp \lp 1-p^{-i} \rp = (-1)^n p^\frac{n(n+1)}{2} \lp p^{-1};p^{-1} \rp_n
    .
    $$
    
    To bound $(p^{-1};p^{-1})_n$, note that
    \begin{equation*}
        0.5601 < (3^{-1};3^{-1})_\infty \leq (p^{-1};p^{-1})_\infty < (p^{-1};p^{-1})_n < 1.\qedhere
    \end{equation*}
\end{proof}

To bound $R_n^*(p)$, it is convenient to divide $R_n^*(p)$ into three parts:
\begin{equation} \label{eqn: R_n(p) three parts}
    R_n^*(p) = Q_n^*(p) + S_n^*(p) - T_n^*(p),
\end{equation}
where
\begin{align*}
    Q_n^*(p) &\coloneqq a_E(p) \cdot p^{-n+\frac{1}{2}} \frac{(p; p)_n}{(p; p)_1(p;p)_{n-1}} - a_E(p) \cdot p^{-\frac{1}{2}},
    \\ S_n^*(p) &\coloneqq a_E(p) \cdot p^{-n+\frac{1}{2}} \sum_{s=1}^{\floor{\frac{n-1}{2}}} p^{2s - 2ns + 2s^2} \frac{(p;p)_n}{(p;p)_s(p;p)_{s+1}(p;p)_{n-2s-1}},
    \\ T_n^*(p) &\coloneqq p^{-n^2 + \frac{1}{2}} \sum_{k=2}^n a_E(p^k) \cdot (-1)^k p^{n(n-k) + \frac{k(k-1)}{2}} \sum_{s=0}^{\floor{\frac{n-k}{2}}} p^{2ks - 2ns + 2s^2} \frac{(p;p)_n}{(p;p)_s(p;p)_{s+k}(p;p)_{n-2s-k}}
    .
\end{align*}

The following three lemmas bound each of these terms.

\begin{lemma}\label{lemma: k = 1, s = 0 case}
If $n \geq 1$ and $p \geq 3$ is a prime, then
$$
\lv Q_n^*(p) \rv \leq 3p^{-1}
.
$$
\end{lemma}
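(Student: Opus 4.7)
The key observation is that $Q_n^*(p)$ collapses to a single elementary term once one invokes Lemma \ref{lemma: pochhammer simplify + bound}. I would first compute the ratio
\[
\frac{(p;p)_n}{(p;p)_1\,(p;p)_{n-1}}
\]
by applying the identity $(p;p)_m = (-1)^m p^{m(m+1)/2}(p^{-1};p^{-1})_m$ to the three Pochhammer factors. The sign contributions $(-1)^n$, $(-1)^1$, $(-1)^{n-1}$ cancel, the exponents of $p$ combine to $n(n+1)/2 - 1 - n(n-1)/2 = n-1$, and the surviving Pochhammer ratio $(p^{-1};p^{-1})_n / (p^{-1};p^{-1})_{n-1}$ telescopes to $1 - p^{-n}$. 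Thus
\[
\frac{(p;p)_n}{(p;p)_1\,(p;p)_{n-1}} = p^{n-1}\bigl(1 - p^{-n}\bigr).
\]

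Substituting this back into the definition of $Q_n^*(p)$ yields
\[
Q_n^*(p) = a_E(p)\,p^{-\tfrac{1}{2}}\bigl(1 - p^{-n}\bigr) - a_E(p)\,p^{-\tfrac{1}{2}} = -a_E(p)\,p^{-n-\tfrac{1}{2}}.
\]
Applying the Hasse bound $|a_E(p)| < 2\sqrt{p}$ gives $|Q_n^*(p)| < 2 p^{-n}$, which is at most $2p^{-1}$ for $n \geq 1$, and in particular at most $3p^{-1}$, proving the lemma.

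\textbf{Expected difficulty.} There is essentially no obstacle: the argument is a direct simplification followed by the Hasse bound. The only minor bookkeeping is tracking the signs and exponents when rewriting the three Pochhammer symbols via Lemma \ref{lemma: pochhammer simplify + bound}; once that is done correctly, the terms cancel cleanly and the estimate is immediate. The constant $3$ in the statement is comfortably loose compared to the sharp constant $2$ that the computation actually yields, so no delicate estimation is needed.
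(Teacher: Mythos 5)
Your rewrite of the Pochhammer ratio drops a factor, and this propagates into an incorrect closed form for $Q_n^*(p)$. When you apply Lemma \ref{lemma: pochhammer simplify + bound} to all three symbols, the denominator contributes $(p;p)_1 = -p\,(p^{-1};p^{-1})_1 = -p(1-p^{-1})$, so after the signs cancel and the powers of $p$ collect to $p^{n-1}$, the remaining Pochhammer ratio is
\[
\frac{(p^{-1};p^{-1})_n}{(1-p^{-1})\,(p^{-1};p^{-1})_{n-1}} = \frac{1-p^{-n}}{1-p^{-1}},
\]
not merely $1-p^{-n}$. Indeed $\frac{(p;p)_n}{(p;p)_1(p;p)_{n-1}}$ is the $q$-integer $[n]_p = 1 + p + \cdots + p^{n-1}$, whereas your expression $p^{n-1}(1-p^{-n}) = p^{n-1} - p^{-1}$ is a different quantity. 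Consequently the clean identity you assert, $Q_n^*(p) = -a_E(p)\,p^{-n-1/2}$, is false; the correct simplification is
\[
Q_n^*(p) = a_E(p)\,p^{-1/2}\!\left(\frac{1-p^{-n}}{1-p^{-1}} - 1\right) = a_E(p)\,p^{-1/2}\cdot\frac{p^{-1}-p^{-n}}{1-p^{-1}},
\]
which yields $\lv Q_n^*(p)\rv \le 2\sqrt{p}\cdot p^{-1/2}\cdot\frac{p^{-1}}{1-p^{-1}} = \frac{2p^{-1}}{1-p^{-1}} \le 3p^{-1}$ using $p\ge 3$. That is the paper's argument, and it explains why the constant is $3$ rather than $2$: the lost factor $\frac{1}{1-p^{-1}}\le \frac{3}{2}$ is precisely where the extra slack goes. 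Your bound $2p^{-1}$ happens to sit below $3p^{-1}$, so the lemma's conclusion is not contradicted, but the intermediate formula is wrong and the derivation needs to be repaired along the lines above.
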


\begin{proof}
    Using Lemma \ref{lemma: pochhammer simplify + bound} and \eqref{eqn: Hasse bound}, we get 
    \begin{align*}
        \lv Q_n^*(p) \rv 
        &= \lv a_E(p) \cdot p^{-n+\frac{1}{2}} \frac{(p; p)_n}{(p; p)_1(p;p)_{n-1}} - a_E(p) \cdot p^{-\frac{1}{2}} \rv
        \\ & \leq 2p^{\frac{1}{2}} \cdot \frac{p^{-\frac{3}{2}}}{1-p^{-1}}
        .
    \end{align*}
    Bounding the denominator using $p\geq 3$, we get
    \begin{equation}
        \lv Q_n^*(p) \rv \leq 3p^{-1}
        .\qedhere
    \end{equation}
\end{proof}

\begin{lemma}\label{lemma: k = 1, s > 0 case}
If $n \geq 1$ and $p \geq 3$ is a prime, then 
$$\lv S_n^*(p) \rv \leq 11.5247p^{-2}.$$

\end{lemma}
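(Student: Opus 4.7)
The approach is to bound each summand of $S_n^*(p)$ in absolute value and then sum the resulting estimates. If $\lfloor(n-1)/2\rfloor = 0$, the sum is empty, so the inequality is trivial; hence assume $n \geq 3$.

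First, apply Hasse's bound $|a_E(p)| \leq 2p^{1/2}$ and use Lemma~\ref{lemma: pochhammer simplify + bound} to rewrite every $(p;p)_k$ appearing in a summand as $(-1)^k p^{k(k+1)/2}(p^{-1};p^{-1})_k$. The sign $(-1)^n$ from the numerator cancels the product of signs $(-1)^{s+(s+1)+(n-2s-1)} = (-1)^n$ from the three denominator factors. Carrying out the $p$-exponent bookkeeping, one combines
\begin{equation*}
\tfrac{1}{2} \;+\; \bigl(-n + \tfrac{1}{2}\bigr) \;+\; \bigl(2s - 2ns + 2s^2\bigr) \;+\; \Bigl(\tfrac{n(n+1)}{2} - \tfrac{s(s+1)}{2} - \tfrac{(s+1)(s+2)}{2} - \tfrac{(n-2s-1)(n-2s)}{2}\Bigr).
\end{equation*}
Using the identity $\tfrac{s(s+1)}{2}+\tfrac{(s+1)(s+2)}{2} = (s+1)^2$, all $n$- and $ns$-dependent terms cancel, leaving the clean total exponent $-s^2 - s$.

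The leftover ratio of $(p^{-1};p^{-1})$ symbols is bounded above by $1/(p^{-1};p^{-1})_\infty^3$, which for $p \geq 3$ is at most $1/0.5601^3$ by Lemma~\ref{lemma: pochhammer simplify + bound}. Each summand thus contributes at most $\tfrac{2}{0.5601^3}\, p^{-s^2-s}$. Factoring out the dominant $s=1$ contribution yields
\begin{equation*}
\sum_{s \ge 1} p^{-s^2-s} \;=\; p^{-2}\Bigl(1 + \sum_{s \ge 2} p^{-(s-1)(s+2)}\Bigr).
\end{equation*}

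Since $(s-1)(s+2) \geq 4$ for $s \geq 2$ with successive gaps of at least $6$, the tail is dominated by a geometric series in $p^{-6}$. Evaluating at the worst case $p=3$ gives $1 + \tfrac{3^{-4}}{1-3^{-6}} = \tfrac{737}{728}$ as a bound on the parenthesized factor. Multiplying all the explicit constants,
\begin{equation*}
|S_n^*(p)| \;\leq\; \frac{2}{0.5601^3}\cdot \frac{737}{728}\cdot p^{-2} \;\leq\; 11.5247\, p^{-2},
\end{equation*}
completing the bound. The main delicate step is the algebraic bookkeeping that produces the remarkable cancellation down to $-s^2-s$; after that, the work reduces to tracking numerical constants carefully enough to close the stated inequality with the chosen precision $0.5601$ from Lemma~\ref{lemma: pochhammer simplify + bound}.
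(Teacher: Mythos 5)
Your proof is correct and follows essentially the same route as the paper's: bound $|a_E(p)|$ by Hasse, convert the $(p;p)$-symbols via Lemma~\ref{lemma: pochhammer simplify + bound}, observe the exponent collapses to $-s^2-s$, bound the ratio of $(p^{-1};p^{-1})$-symbols by $1/0.5601^3$, and compare the resulting sum to a geometric series. The only cosmetic difference is in the tail estimate: the paper uses the inequality $-s-s^2 \le -4s+2$ to compare directly with $\sum p^{-4s+2}$, giving a factor $\frac{1}{1-p^{-4}} \le \frac{81}{80}$ at $p=3$, whereas you factor out $p^{-2}$ and bound $\sum_{s\ge 2} p^{-(s-1)(s+2)}$ by a geometric series in $p^{-6}$, giving the marginally tighter factor $\frac{737}{728}$; both yield the stated constant $11.5247$.
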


\begin{proof}
    Simplifying and using Lemma \ref{lemma: pochhammer simplify + bound} and \eqref{eqn: Hasse bound}, we get
    \begin{align*}
        \lv S_n^*(p) \rv
        &= \lv a_E(p) \cdot p^{-n+\frac{1}{2}} \sum_{s=1}^{\floor{\frac{n-1}{2}}} p^{2s - 2ns + 2s^2} \frac{(p;p)_n}{(p;p)_s(p;p)_{s+1}(p;p)_{n-2s-1}} \rv
        \\ &\leq 2 \sum_{s=1}^{\lfloor \frac{n-1}{2} \rfloor} p^{-s-s^2} \frac{1}{0.5601^3}.
    \end{align*}
Comparing the sum to a geometric series using $-s-s^2 \leq -4s + 2$, and then using $p\geq 3$, we have
    \begin{equation*}
        \lv S_n^*(p) \rv \leq 11.5247p^{-2}.\qedhere
    \end{equation*}
\end{proof}

\begin{lemma}\label{lemma: k > 1 case}
If $n \geq 1$ and $p \geq 3$ is a prime, then 
$$
\lv T_n^*(p) \rv \leq 12.6305p^{-\frac{3}{2}}
.
$$
\end{lemma}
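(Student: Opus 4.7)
The plan is to combine the Pochhammer identity from Lemma \ref{lemma: pochhammer simplify + bound} with the Weil bound $\lv a_E(p^k) \rv = \lv \alpha^k + \overline{\alpha}^k \rv \le 2 p^{k/2}$ in order to reduce the double sum defining $T_n^*(p)$ to two clean geometric-series estimates.

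First, I would apply $(p;p)_m = (-1)^m p^{m(m+1)/2}(p^{-1};p^{-1})_m$ to each of the four Pochhammer factors in the ratio $(p;p)_n / [(p;p)_s (p;p)_{s+k} (p;p)_{n-2s-k}]$. Since $n = s + (s+k) + (n-2s-k)$, the four signs multiply to $+1$ and the linear parts $m/2$ inside each $m(m+1)/2$ telescope away, leaving only a quadratic $p$-exponent equal to $-3s^2 - 3sk + 2sn + kn - k^2$. Incorporating the prefactors $p^{-n^2+1/2}$, $p^{n(n-k) + k(k-1)/2}$, $p^{2ks - 2ns + 2s^2}$, together with the Weil contribution $p^{k/2}$ from $\lv a_E(p^k)\rv$, all $n$-dependence cancels and each summand of $T_n^*(p)$ is dominated in absolute value by
\begin{equation*}
\frac{2}{(0.5601)^3} \, p^{\frac{1}{2} - \frac{k^2}{2} - s^2 - sk},
\end{equation*}
where the factor $(0.5601)^{-3}$ bounds the product of the three $(p^{-1};p^{-1})_\bullet$ terms in the denominator via Lemma \ref{lemma: pochhammer simplify + bound}.

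Next, I would extend the double sum to $s \geq 0$ and $k \geq 2$ and estimate each sum in turn. Using $s^2 \geq s$ for $s \geq 1$, the inner sum satisfies $\sum_{s \geq 0} p^{-s^2 - sk} \leq 1/(1-p^{-(k+1)})$, which is at most $27/26$ for $k \geq 2$ and $p \geq 3$. For the outer sum, the consecutive ratio $p^{-k^2/2}/p^{-(k-1)^2/2} = p^{-(2k-1)/2}$ is bounded by $p^{-5/2}$ whenever $k \geq 3$, hence $\sum_{k \geq 2} p^{-k^2/2} \leq p^{-2}/(1-p^{-5/2})$. Multiplying everything out and using $p \geq 3$ yields
\begin{equation*}
\lv T_n^*(p) \rv \leq \frac{2}{(0.5601)^3} \cdot \frac{1}{1-3^{-3}} \cdot \frac{1}{1-3^{-5/2}} \cdot p^{-\frac{3}{2}},
\end{equation*}
which evaluates numerically to at most $12.6305 \, p^{-3/2}$.

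The main obstacle is the careful bookkeeping showing that every $n$-dependent contribution to the total $p$-exponent (arising from four distinct sources: the outer prefactor, the middle prefactor, the inner prefactor, and the Pochhammer ratio) cancels. Once the summand has been reduced to the clean form $p^{\frac{1}{2} - \frac{k^2}{2} - s^2 - sk}$ up to the explicit constant, the remaining estimates are routine geometric-series manipulations, and the numerical constant $12.6305$ arises simply from evaluating the product of the three uniform bounds at the worst-case prime $p = 3$.
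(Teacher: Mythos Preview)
Your proof is correct and follows exactly the approach the paper intends: the paper's own proof simply reads ``This follows exactly as in Lemma~\ref{lemma: k = 1, s > 0 case},'' and what you have written is precisely the natural execution of that strategy (Pochhammer identity plus Weil bound, then geometric-series comparison), with the exponent bookkeeping and the numerical evaluation at $p=3$ carried out in full. Your constant $\frac{2}{(0.5601)^3}\cdot\frac{27}{26}\cdot\frac{1}{1-3^{-5/2}}$ indeed evaluates to $12.6305$, matching the stated bound.
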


\begin{proof}
    This follows exactly as in Lemma \ref{lemma: k = 1, s > 0 case}.
\end{proof}

Combining the previous three lemmas and using $p\geq 3$, we obtain the bound of $R_n^*(p)$.
\begin{lemma} \label{lemma: explicit constant}

If $n \geq 1$ and $p \geq 3$ prime, then
\[
\pushQED{\qed}
\lv R_n^*(p) \rv \leq 14.1339p^{-1}
.\qedhere
\popQED
\]
\end{lemma}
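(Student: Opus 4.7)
The plan is to combine the three preceding lemmas via the triangle inequality applied to the decomposition $R_n^*(p) = Q_n^*(p) + S_n^*(p) - T_n^*(p)$ from \eqref{eqn: R_n(p) three parts}. This gives immediately
\[
\lv R_n^*(p) \rv \leq \lv Q_n^*(p) \rv + \lv S_n^*(p) \rv + \lv T_n^*(p) \rv \leq 3 p^{-1} + 11.5247 p^{-2} + 12.6305 p^{-3/2},
\]
and the entire task is then to rewrite the right-hand side as a constant times $p^{-1}$.

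Since the target bound is of the form $C p^{-1}$, I would factor out $p^{-1}$ and use the hypothesis $p \geq 3$ to control the remaining factors of $p^{-1}$ and $p^{-1/2}$. Explicitly, $p^{-2} = p^{-1} \cdot p^{-1} \leq \tfrac{1}{3} p^{-1}$ and $p^{-3/2} = p^{-1} \cdot p^{-1/2} \leq \tfrac{1}{\sqrt{3}} p^{-1}$, so
\[
\lv R_n^*(p) \rv \leq \lp 3 + \tfrac{11.5247}{3} + \tfrac{12.6305}{\sqrt{3}} \rp p^{-1}.
\]
A brief numerical check shows that $3 + 11.5247/3 + 12.6305/\sqrt{3} < 14.1339$, yielding the claimed bound.

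There is no real obstacle here, as all the work has been pushed into the three preceding lemmas; the only care required is to verify that the three numerical contributions genuinely sum to at most $14.1339$, and to note that the bound is valid uniformly in $n \geq 1$ because each of the lemmas on $Q_n^*$, $S_n^*$, and $T_n^*$ already held uniformly in $n$. Since the statement is presented as a corollary of the three preceding lemmas, the proof can reasonably be given in just a couple of lines.
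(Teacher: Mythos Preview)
Your proposal is correct and matches the paper's approach exactly: the paper simply states that the bound follows by combining the three preceding lemmas and using $p \geq 3$, which is precisely the triangle-inequality-plus-numerics argument you give. Your numerical check $3 + 11.5247/3 + 12.6305/\sqrt{3} \approx 14.1338$ is the intended computation.
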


Lemma \ref{lemma: explicit constant} implies that $a_{E,n}^*(p)$ and $a_{E}^*(p)$ are close for large primes. To bound the error for small primes, we use an approximation of the prime counting function \cite{primenumbertheorem}. More precisely, if $X \geq 17$, then
\begin{equation} \label{eqn: prime number theorem}
    \frac{X}{\log X} < \# \{ p \leq X \} < 1.2551 \frac{X}{\log X}
    .
\end{equation}

\begin{remark}
It follows from \eqref{eqn: prime number theorem} that given any $A, X \ge 17$, we have
    \begin{equation} \label{eqn: PNT}
        \frac{\# \{p \le A \}}{\# \{ p \le X \}} \le 1.2551 \frac{A \log X}{X \log A}.
    \end{equation}
\end{remark}

Finally, we want to absorb the bounds for $R_n^*(p)$ and the prime counting function into the explicit bound in \eqref{eqn: ST bound}. To this end, note that since $N$ is the conductor of an elliptic curve, we have $N \geq 11$. It follows that
    \begin{equation} \label{eqn: X^-1/2 bound}
        X^{-\frac{1}{2}} \leq 0.2429\frac{\log(N\log X)}{\sqrt{\log X}}
        .
    \end{equation}

\section{Proofs of Theorems}\label{section: 4}

Here we use the results from the previous sections to prove Theorems \ref{thm: supersingular formula}, \ref{thm: k3 explicit formula}, \ref{thm: ineffective theorems}, \ref{thm: effective theorems}, and \ref{thm: K3 bounding theorems}.

\subsection{Proof of Theorems \ref{thm: supersingular formula} and \ref{thm: k3 explicit formula}}

Here we prove Theorems \ref{thm: supersingular formula} and \ref{thm: k3 explicit formula}.

\begin{proof}[Proof of Theorem \ref{thm: supersingular formula}]
If $p$ is supersingular for $E$, then $a_E^*(p) = 0$. This implies that the Frobenius eigenvalues are given by $\alpha = \sqrt{p}i$ and $\overline{\alpha} = - \sqrt{p}i$. Substituting into \eqref{eq: matrix pt with eigenvalue}, we get
\begin{align*}
N_{n}(E, \mathbb{F}_p) & = (-1)^n p^{\frac{n(n-1)}{2}}(p;p)_n \sum_{\substack{r + s + u = n \\ r, s, u \ge 0 }} \frac{(-1)^u (i)^{r - s} p^{\frac{r - s}{2}}p^{s + \frac{u(u + 1)}{2}}}{(p ; p)_r(p; p)_s(p; p)_u}
\\ & = (-1)^n p^{\frac{n^2}{2}} \sum_{\substack{r + s + u = n \\ r, s, u \ge 0}} (-1)^u (i)^{r - s}p^{\frac{u^2}{2}} \binom{n}{r, s, u}_p
.
\end{align*}
Pairs of terms in our summand $(r_1, s_1, u)$ and $(r_2, s_2, u)$ with $s_1 = r_2$ and $r_1 = s_2$ will differ by a sign when $r-s$ is odd. Therefore, we have
\begin{equation*}
N_{n}(E, \mathbb{F}_p) = p^{\frac{n^2}{2}} \sum_{\substack{r + s + u = n \\ r, s, u \ge 0 \\ r \equiv s \, \text{mod} \, 2}} (-1)^{\frac{r - s}{2}} p^{\frac{u^2}{2}} \binom{n}{r, s, u}_p. \qedhere
\end{equation*}
\end{proof}

\begin{proof}[Proof of Theorem \ref{thm: k3 explicit formula}]
Let $\alpha$ and $\overline{\alpha}$ be the Frobenius eigenvalues of the associated Clausen elliptic curve of $X_\lambda$.
If $p$ is a supersingular prime for this curve, then $\alpha^2 = \overline{\alpha}^2 = -p$. Substituting this into \eqref{eq: K3 matrix pts} and then rearranging powers of $p$, the statement of the theorem follows.
\end{proof}

\subsection{Proof of Theorems \ref{thm: ineffective theorems} and \ref{thm: K3 bounding theorems}}

Here we prove Theorems \ref{thm: ineffective theorems} and \ref{thm: K3 bounding theorems}.

\begin{proof}[Proof of Theorem \ref{thm: ineffective theorems} (i)]
    Fix $\varepsilon > 0$. By \eqref{equation: a_E, n} and Lemma \ref{lemma: explicit constant}, we have
    $$
    a_{E,n}^*(p) = a_E^*(p) + R_n^*(p)
    $$
    where $\lv R_n^*(p) \rv \leq 14.15p^{-1}$. Choosing $A > \frac{14.15}{\varepsilon}$, we have $\lv R_n^*(p) \rv < \varepsilon$ for every $p \geq A$. We note that if $p \geq A$, then $[a-R_n^*(p),b-R_n^*(p)] \subseteq [a-\varepsilon, b+\varepsilon]$. Therefore, 
    \begin{equation*}
        \frac{\#\{A \le p \le X \mid a_{E,n}^*(p) \in [a, b]\}}{\{p\le X\}} 
        \le \frac{\#\{A \le p \le X \mid a_E^*(p) \in [a - \varepsilon, b + \varepsilon]\}}{\{p\le X\}}
        .
    \end{equation*}

   It follows that
    \begin{equation*}
        \limsup_{X \to \infty} \frac{\#\{A \le p \le X \mid a_{E,n}^*(p) \in [a, b]\}}{\{p\le X\}} 
        \le \limsup_{X \to \infty} \frac{\#\{A \le p \le X \mid a_E^*(p) \in [a - \varepsilon, b + \varepsilon]\}}{\{p\le X\}} 
        .
    \end{equation*}

   Applying \eqref{eq: og Sato--Tate conjecture} to the right-hand side, noting that $A$ is finite, and bounding with $f_{ST} (t) \leq \frac{1}{\pi}$, we conclude
    \begin{align*}
        \limsup_{X \to \infty} \frac{\#\{p \le X \mid a_{E,n}^*(p) \in [a, b]\}}{\{p\le X\}} 
        & \le \int_{a-\varepsilon}^{b+\varepsilon} f_{ST}(t) \, dt
        \\ &\le \int_{a}^{b} f_{ST}(t) \, dt + \frac{2\varepsilon}{\pi}.
    \end{align*}

   Analogously, using $[a+\varepsilon,b-\varepsilon] \subseteq [a-R_n^*(p),b-R_n^*(p)]$, we have that
   \begin{equation*}
       \liminf_{X \to \infty} \frac{\#\{p \le X \mid a_{E,n}^*(p) \in [a, b]\}}{\{p\le X\}} \geq \int_{a}^{b} f_{ST}(t) \, dt - \frac{2\varepsilon}{\pi}
       .
   \end{equation*}
   
   Since $\varepsilon > 0$ is arbitrary, the proof is complete.
\end{proof}

\begin{proof}[Proof of Theorem \ref{thm: ineffective theorems} (ii)] The proof for $E$ with CM follows analogously, where we replace $f_{ST}(t)$ with the distribution in \eqref{eqn: cm limiting distribution} and consider only the primes which split in the CM field of $E$. We point out that even though $\frac{1}{\pi} \cdot \frac{1}{\sqrt{4-t^2}}$ is unbounded, the argument remains valid since its integral over an interval of length $\varepsilon$ converges to $0$ as $\varepsilon$ approaches $0$.
\end{proof}

\begin{proof}
[Proof of Theorem \ref{thm: K3 bounding theorems}]
From \eqref{equation: normalized K3 matrix points}, the difference between $A_{\lambda,n}^*(p)$ and $A_\lambda^*(p)$ vanishes as $p$ approaches infinity. The proof of Theorem \ref{thm: K3 bounding theorems} thus follows analogously to the proof of Theorem \ref{thm: ineffective theorems} (i), where Theorem \ref{thm: 2.2} plays the role of \eqref{eq: og Sato--Tate conjecture} and the bounding of the integrals is analogous to the proof of Theorem \ref{thm: ineffective theorems} (ii).
\end{proof}
\pagebreak
\subsection{Proof of Theorem \ref{thm: effective theorems}}

Here we prove Theorem \ref{thm: effective theorems}.

\begin{proof} [Proof of Theorem \ref{thm: effective theorems}]
We write $a_{E,n}^*(p) = a_E^*(p) + R_n^*(p)$. Using Lemma \ref{lemma: explicit constant}, we have that $$[a-R_n^*(p), b-R_n^*(p)] \subseteq [a - Cp^{-1}, b + Cp^{-1}],$$ where $C = 14.1339$. This implies that
\begin{multline*}
\frac{\#\{p \le X \mid a_{E, n}^*(p) \in [a, b]\}}{\#\{p \le X\}} - \int_{a}^{b} f_{ST}(t) \, dt
\\ \le \frac{\#\{p \le X \mid a_{E}^*(p) \in [a - Cp^{-1}, b + Cp^{-1}]\}}{\#\{p \le X\}} - \int_{a}^{b} f_{ST}(t) \, dt
.
\end{multline*}
In order to bound the right-hand side, we consider small and large primes separately. To this end, for any $A$ such that $17 \leq A \leq X$, we have
\begin{align*}
& \frac{\#\{p \le X \mid a_{E, n}^*(p) \in [a, b]\}}{\#\{p \le X\}} - \int_{a}^{b} f_{ST}(t) \, dt 
\\ & \leq \frac{\#\{p \le A \mid a_{E}^*(p) \in [a - Cp^{-1}, b+Cp^{-1}]\}}{\#\{p \le X\}} + \frac{\#\{A \le p \le X \mid a_{E}^*(p) \in [a - Cp^{-1}, b+Cp^{-1}]\}}{\#\{p \le X\}} 
\\ & \qquad - \int_{a}^{b} f_{ST}(t) \, dt 
\\ & \leq \frac{\#\{p \le A\}}{\#\{p \le X\}} + \frac{\#\{A \le p \le X \mid a_{E}^*(p) \in [a - CA^{-1}, b+CA^{-1}]\}}{\#\{p \le X\}} 
\\ & \qquad - \lp \int_{a-CA^{-1}}^{b+CA^{-1}} f_{ST}(t) \ dt - \int_{a-CA^{-1}}^{a} f_{ST}(t) \, dt - \int_{b}^{b+CA^{-1}} f_{ST}(t) \, dt \rp
.
\end{align*}
Using \eqref{eqn: PNT}, the explicit bound from \eqref{eqn: ST bound}, and $f_{ST} \leq \frac{1}{\pi}$, we have
\begin{align*}
& \frac{\#\{p \le X \mid a_{E, n}^*(p) \in [a, b]\}}{\#\{p \le X\}} - \int_{a}^{b} f_{ST}(t) \, dt
\\ & \le 1.2551 \frac{A \log X}{X \log A} + \lp \frac{\#\{p \le X \mid a_{E}^*(p) \in [a - CA^{-1}, b+CA^{-1}]\}}{\#\{p \le X\}} -  \int_{a-CA^{-1}}^{b+CA^{-1}} f_{ST}(t) \, dt  \rp + \frac{2C}{\pi}A^{-1}
\\ & \le 1.2551 \frac{A \log X}{X \log A} + 58.1\frac{\log \lp N \log X \rp}{\sqrt{\log X}} + \frac{2C}{\pi}A^{-1}
.
\end{align*}
Choosing $A = X^{\frac{1}{2}}$ and applying \eqref{eqn: X^-1/2 bound}, we get
\begin{align*}
& \frac{\#\{p \le X \mid a_{E, n}^*(p) \in [a, b]\}}{\#\{p \le X\}} - \int_{a}^{b} f_{ST}(t) \, dt 
\\ & \le \lp 2 \cdot 1.2551 + \frac{2 \cdot 14.1339}{\pi} \rp X^{-\frac{1}{2}} + 58.1 \lp \frac{\log \lp N \log X \rp}{\sqrt{\log X}} \rp
\\ & \leq 58.44 \frac{\log \lp N \log X \rp}{\sqrt{\log X}}
.
\end{align*}
Since the lower bound follows analogously, the proof is complete.
\end{proof}

\section{Numerical Examples}\label{section: 5}
We provide histograms of the normalized $2 \times 2$ matrix point count errors for primes $p \le 10^6$. The limiting distributions are overlaid.

\begin{figure}[H]
    \centering
    \includegraphics[width = 10cm]{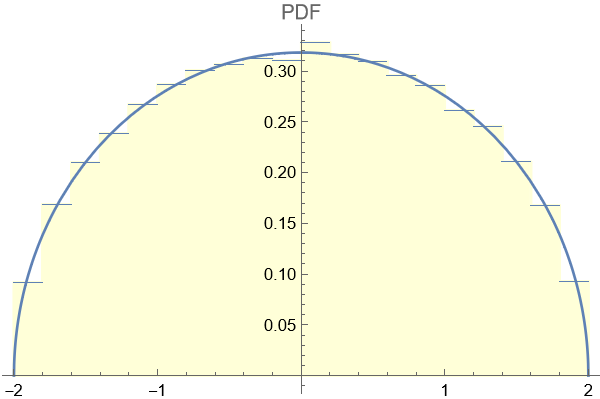}
    \caption*{\textsc{Figure 1 (Semicircular).} Histogram of $a_{E,2}^*(p)$ for \\ $E \colon y^2 = x^3 - 432x + 8208$.}
\end{figure}

\begin{figure}[H]
    \centering
    \includegraphics[width = 10cm]{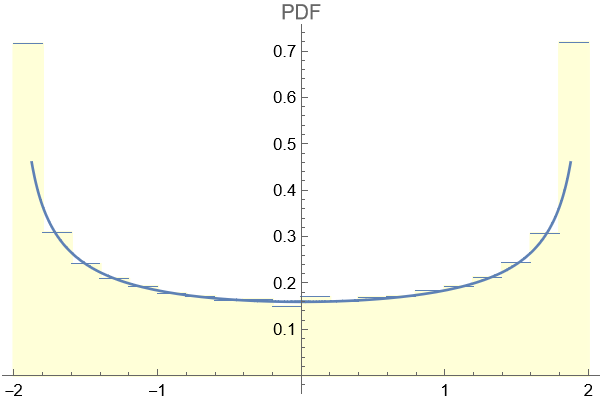}
    \caption* {\textsc{Figure 2.} Histogram of $a_{E,2}^*(p)$ for \\ $E \colon y^2 = x^3 + x$.}
\end{figure}

\begin{figure}[H]
    \centering
    \includegraphics[]{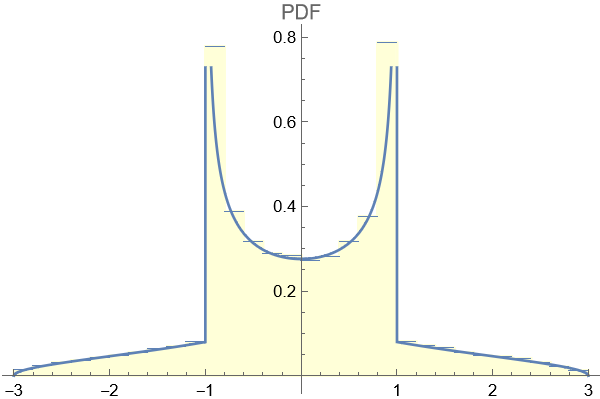}
    \caption*{\textsc{Figure 3 (Original Batman).} Histogram of $A_{\lambda, 2}^*(p)$ for $\lambda = 5.$}
\end{figure}

\begin{figure}[H]
    \centering
    \includegraphics[]{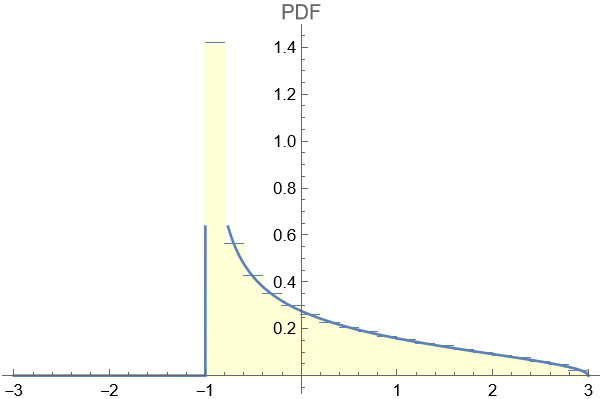}
    \caption*{\textsc{Figure 4 (Half-Batman).} Histogram of $A_{\lambda,2}^*(p)$ for $\lambda = 3.$}
\end{figure}

\begin{figure}[H]
    \centering
    \includegraphics[]{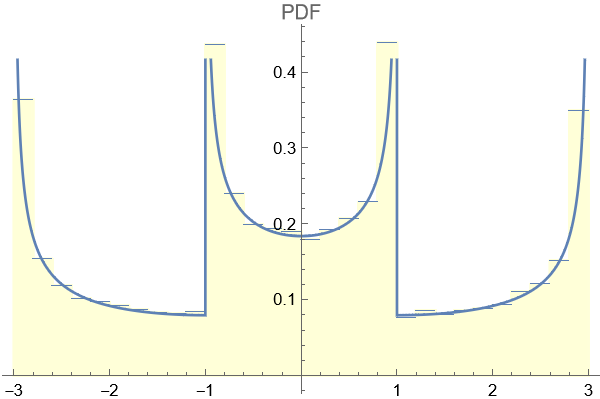}
    \caption*{\textsc{Figure 5 (Flying Batman).} Histogram of $A_{\lambda,2}^*(p)$ for $\lambda = 1$.}
\end{figure}

\begin{figure}[H]
    \centering
    \includegraphics[]{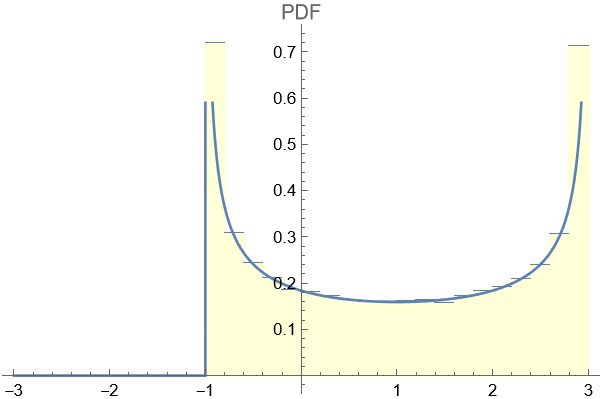}
    \caption*{\textsc{Figure 6 (Half-Flying Batman).} Histogram of $A_{\lambda, 2}^*(p)$ for  $\lambda = -4$.}
\end{figure}

\bibliography{main}

\begin{thebibliography}{10}

\bibitem{aopk3}
S.~Ahlgren, K.~Ono, and D.~Penniston.
\newblock Zeta functions of an infinite family of {$K3$} surfaces.
\newblock {\em Amer. J. Math.}, 124(2):353--368, 2002.

\bibitem{taylor2}
T.~Barnet-Lamb, D.~Geraghty, M.~Harris, and R.~Taylor.
\newblock A family of {C}alabi-{Y}au varieties and potential automorphy {II}.
\newblock {\em Publ. Res. Inst. Math. Sci.}, 47(1):29--98, 2011.

\bibitem{chenshenthorner}
Q.~Chen, E.~Shen, and J.~Thorner.
\newblock Effective {S}ato--{T}ate distributions for surfaces arising from
  products of elliptic curves.
\newblock Pending.

\bibitem{sato1}
L.~Clozel, M.~Harris, and R.~Taylor.
\newblock Automorphy for some $\ell$-adic lifts of automorphic mod $\ell$
  {G}alois representations.
\newblock {\em Publ. Math. Inst. Hautes \'{E}tudes Sci.}, 108:1--181, 2008.
\newblock With Appendix A, summarizing unpublished work of Russ Mann, and
  Appendix B by Marie-France Vign\'{e}ras.

\bibitem{elkies1987}
N.~D. Elkies.
\newblock The existence of infinitely many supersingular primes for every
  elliptic curve over $\mathbb{Q}$.
\newblock {\em Invent. Math.}, 89(3):561--567, 1987.

\bibitem{taylor1}
M.~Harris, N.~Shepherd-Barron, and R.~Taylor.
\newblock A family of {C}alabi-{Y}au varieties and potential automorphy.
\newblock {\em Ann. of Math. (2)}, 171(2):779--813, 2010.

\bibitem{hasse1936}
H.~Hasse.
\newblock Zur {T}heorie der abstrakten elliptischen {F}unktionenk\"{o}rper
  {III}. {D}ie {S}truktur des {M}eromorphismenrings. {D}ie {R}iemannsche
  {V}ermutung.
\newblock {\em J. Reine Angew. Math.}, 175:193--208, 1936.

\bibitem{hecke}
E.~Hecke.
\newblock Eine neue {A}rt von {Z}etafunktionen und ihre {B}eziehungen zur
  {V}erteilung der {P}rimzahlen.
\newblock {\em Math. Z.}, 6(1-2):11--51, 1920.

\bibitem{hoey2022unconditional}
A.~Hoey, J.~Iskander, S.~Jin, and F.~Trejos~Su\'{a}rez.
\newblock An unconditional explicit bound on the error term in the
  {S}ato-{T}ate conjecture.
\newblock {\em Q. J. Math.}, 73(4):1189--1225, 2022.

\bibitem{huang2022mutually}
Y.~Huang.
\newblock Mutually annihilating matrices, and a {C}ohen--{L}enstra series for
  the nodal singularity.
\newblock {\em J. Algebra}, 619:26--50, 2023.

\bibitem{huangonosaad}
Y.~Huang, K.~Ono, and H.~Saad.
\newblock Counting matrix points on certain varieties over finite fields.
\newblock {\em Contemp. Math., Amer. Math. Soc.}, accepted for publication.
\newblock \url{https://arxiv.org/abs/2302.04830}.

\bibitem{onosaadsaikia}
K.~Ono, H.~Saad, and N.~Saikia.
\newblock Distribution of values of {G}aussian hypergeometric functions.
\newblock {\em Pure Appl. Math. Q.}, 19(1):371--407, 2023.

\bibitem{primenumbertheorem}
J.~B. Rosser and L.~Schoenfeld.
\newblock Approximate formulas for some functions of prime numbers.
\newblock {\em Illinois J. Math.}, 6:64--94, 1962.

\bibitem{silvermanbook}
J.~H. Silverman.
\newblock {\em The Arithmetic of Elliptic Curves}.
\newblock Springer New York, NY, 2009.

\bibitem{sato2}
R.~Taylor.
\newblock Automorphy for some $\ell$-adic lifts of automorphic mod $\ell$
  {G}alois representations. {II}.
\newblock {\em Publ. Math. Inst. Hautes \'{E}tudes Sci.}, 108:183--239, 2008.

\end{thebibliography}

\Addresses

\end{document}